\title[Central limit theorem for components in meandric systems]
{Central limit theorem for components in meandric systems through
  high moments}
\date{3 March, 2023} 
\newcommand{\ignorer}[1]{}
\def\and{\ \wedge\ }
\renewcommand{\theenumi}{\roman{enumi}}
\renewcommand{\labelenumi}{\theenumi)}
\theoremstyle{plain}
\newtheorem{lemma}{Lemma}[section]
\newtheorem{theorem}[lemma]{Theorem}
\newtheorem{proposition}[lemma]{Proposition}
\newtheorem{definition}[lemma]{Definition}
\theoremstyle{definition}
\newtheorem{remark}[lemma]{Remark}
\def\eps{\varepsilon}
\renewcommand\epsilon{\varepsilon}
\def\P{\mathbb{P}}
\newcommand\E{\operatorname{\mathbb E}{}} 
\def\cF{\mathcal{F}}
\DeclareMathOperator{\Cat}{Cat}
\def\cT{\mathcal{T}}
\def\cN{\mathcal{N}}
\newcommand{\set}[1]{\left\{#1\right\}}
\newcommand\restr[2]{{%
		\left.\kern-\nulldelimiterspace %
		#1 %
		\right|_{#2} %
	}}
\author{Svante Janson}
\thanks{Supported by the Knut and Alice Wallenberg Foundation}
\address{Department of Mathematics, Uppsala University, PO Box 480,
SE-751~06 Uppsala, Sweden}
\email{svante.janson@math.uu.se}
\newcommand\urladdrx[1]{{\urladdr{\def~{{\tiny$\sim$}}#1}}}
\numberwithin{equation}{section}
\newcommand\xpar[1]{(#1)}
\newcommand\bigpar[1]{\bigl(#1\bigr)}
\newcommand\Bigpar[1]{\Bigl(#1\Bigr)}
\newcommand\bigsqpar[1]{\bigl[#1\bigr]}
\newcommand\biggsqpar[1]{\biggl[#1\biggr]}
\newcommand{\refT}[1]{Theorem~\ref{#1}}
\newcommand{\refTs}[1]{Theorems~\ref{#1}}
\newcommand{\refL}[1]{Lemma~\ref{#1}}
\newcommand{\refLs}[1]{Lemmas~\ref{#1}}
\newcommand{\refP}[1]{Proposition~\ref{#1}}
\newcommand{\refS}[1]{Section~\ref{#1}}
\newcommand{\refSS}[1]{Section~\ref{#1}}
\newcommand\hT{\widehat{T}}
\newcommand\hJ{\widehat{J}}
\newcommand\tc{d}
\newcommand\tell{\widetilde{\ell}}
\newcommand\tK{\widetilde{K}}
\newcommand\bc{\tc}
\newcommand\bK{\tK}
\newcommand\bell{\tell}
\newcommand\gs{\sigma}
\newcommand\gss{\sigma^2}
\newcommand\llrr[1]{\llbracket #1 \rrbracket}
\newcommand\ntoo{\ensuremath{{n\to\infty}}}
\newcommand\vI{\nu}
\xdef\klockan{\the\count1.0\the\count255}
\xdef\klockan{\the\count1.\the\count255}\fi
\newenvironment{romenumerate}[1][-10pt]{
\addtolength{\leftmargini}{#1}\begin{enumerate}
 \renewcommand{\labelenumi}{\textup{(\roman{enumi})}}%
 \renewcommand{\theenumi}{\textup{(\roman{enumi})}}%
 }{\end{enumerate}}
 \author[P. Thévenin]{Paul Thévenin}
\address{Institut für Mathematik, Oskar-Morgenstern-Platz 1, 1090 Vienna, Austria}
       \email{paul.thevenin@univie.ac.at}
\subjclass[2020]{60C05}
\begin{document}

\begin{abstract} 
We investigate here the behaviour of a large typical meandric system, proving a central limit theorem for the number of components of given shape. Our main tool is a theorem of Gao and Wormald, that allows us to deduce a central limit theorem from the asymptotics of large moments of our quantities of interest.
\end{abstract}

\maketitle

\section{Model and main result}

\subsection{Definitions and some notation}

Let $n \geq 1$ be an integer.
A \emph{meandric system} of size $n$ is a collection of non-crossing loops
in the plane that
intersect the horizontal axis exactly at the points $[2n] := \{1, \ldots,
2n \}$, we call these points the \emph{vertices} of the meandric system; two meandric systems that differ only by a continuous deformation of
the plane that fixes the horizontal axis are regarded as the same. Meandric systems were introduced, to our knowledge, by Di Francesco, Golinelli and Guitter \cite{DFGG}, and have recently become again a topic of interest \cite{Kargin, FT22, BGP}.
A meandric system can be regarded as a set of $n$ non-crossing arcs with
endpoints $[2n]$ in the upper half-plane, and another such set in the lower
half-plane; a meandric system thus determines two non-crossing matchings
(pair-partitions) of $[2n]$, one for each half-plane, and it is easily seen
that this yields a bijection between meandric systems of size $n$ and pairs
of two non-crossing matchings of $[2n]$.
In particular, since the number of non-crossing matchings of $[2n]$ is the
Catalan number
\begin{align}\label{cat}
  \Cat_n:=\frac{(2n)!}{n!\,(n+1)!},
\end{align}
see e.g.\ \cite[item 61]{Stanley},
the number of meandric systems of size $n$ is $\Cat_n^2$.
%

Each connected component of a meandric system is a single loop, intersecting
the horizontal axis in a subset of $[2n]$,
say $\set{i_1<\dots<i_{2k}}$, which we call the \emph{support}
of the loop. 
Note that necessarily there is an even number of vertices in the support, and an
even number of integers in each gap $(i_j,i_{j+1})$, i.e., $i_{j+1}-i_j$ is odd
for $1\le j<2k$.
We say that two such loops have the same shape if they differ only by a
translation. Thus we may normalize each shape to have leftmost vertex $1$,
and make the following formal definition:

\begin{definition}\label{D1}
A \emph{shape} is a (connected) non-crossing loop $S$
with support a set of integers 
$\set{ i_1=1 < i_2 < \cdots< i_{2k} = 2\ell}$, for some
$k,\ell \geq 1$, such that $i_{j+1}-i_j$ is odd for all $1 \leq j \leq 2k-1$.

Let $M$ be a meandric system and $C$ a connected component of $M$.
We say that $C$ has shape $S$ if $C$ and $S$ differ only by a translation.
%
\end{definition}

Our main theorem in the following.
We prove two special cases as \refTs{T1} and \ref{Tstrong},
and prove the remaining, more difficult, case in \refSS{SSweak}.

\begin{theorem}
\label{thm:main}
Fix a shape $S$. Let $M_n$ be a uniformly random meandric system of size $n$
(that is, on $\llbracket 1, 2n \rrbracket$) and denote by $X_{S,n}$ the number of
connected components of $M_n$ with shape $S$. Then, $X_{S,n}$ satisfies a
central limit theorem: there exist $\mu_S, \sigma_S > 0$ such that 
\begin{align}\label{tm}
\frac{X_{S,n}-n\mu_S}{\sigma_S \sqrt{n}} \underset{n \rightarrow \infty}{\overset{(d)}{\longrightarrow}} \cN(0,1),
\end{align}
where $\cN(0,1)$ denotes the standard normal distribution.
\end{theorem}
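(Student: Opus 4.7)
The plan is to apply the central limit theorem of Gao and Wormald, which states that if a sequence of non-negative integer random variables $X_n$ has factorial moments satisfying, uniformly in $r$ up to some growing range $r_n \to \infty$,
$$\E[(X_n)_r] = (1+o(1))\,\mu_n^r \exp\!\left(\binom{r}{2}\frac{\sigma_n^2-\mu_n}{\mu_n^2}\right)$$
with $\mu_n,\sigma_n^2\to\infty$ at the appropriate relative scales, then $(X_n-\mu_n)/\sigma_n \to \mathcal{N}(0,1)$ in distribution. The task reduces to establishing this factorial moment expansion for $X_n = X_{S,n}$ with $\mu_n \sim n\mu_S$ and $\sigma_n^2 \sim n\sigma_S^2$.

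I would first write $X_{S,n} = \sum_{p} \mathbbm{1}_p$ over admissible starting positions $p \in [2n]$ for a copy of $S$, and expand
$$\E[(X_{S,n})_r] = \frac{1}{\Cat_n^2}\sum_{(p_1,\ldots,p_r)} N(p_1,\ldots,p_r),$$
summed over ordered $r$-tuples of admissible placements with pairwise disjoint supports, where $N(p_1,\ldots,p_r)$ counts meandric systems of size $n$ containing each of the $r$ placed copies of $S$ as a connected component. The key structural observation is that $N$ factorizes between the two half-planes: the fixed upper arcs of the $r$ placed shapes carve the remaining upper vertices into cells of even sizes $2m_i^{\mathrm{up}}$, and a non-crossing matching of the upper half-plane extends the fixed arcs if and only if it matches each cell independently, giving $\prod_i \Cat_{m_i^{\mathrm{up}}}$; likewise in the lower half, so $N(p_1,\ldots,p_r) = \prod_i \Cat_{m_i^{\mathrm{up}}} \cdot \prod_j \Cat_{m_j^{\mathrm{low}}}$.

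Using the asymptotic $\Cat_m \sim 4^m/(m^{3/2}\sqrt{\pi})$, each summand reduces to an exponential contribution in the number of removed vertices (constant across $r$-tuples) times polynomial Stirling corrections $\prod (m_i/n)^{-3/2}$ in the cell sizes. The sum over $r$-tuples of well-separated placements then yields $(n\mu_S)^r/r!$ at leading order, where $\mu_S>0$ is identified by a first-moment calculation summing the rescaled Catalan weights over local embeddings of $S$; the variance constant $\sigma_S^2$ follows from a second-moment computation in which the sum over pairs is split according to whether the two copies are far apart, nested, or otherwise interacting.

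The main obstacle is to propagate this analysis to factorial moments of order $r$ growing with $n$, with enough uniform precision to isolate the Gaussian correction $\exp\bigl(\binom{r}{2}(\sigma_S^2-\mu_S)/(n\mu_S^2)\bigr)$. Concretely, one must show: (i) the sum over $r$-tuples containing exactly one interacting pair yields precisely $\binom{r}{2}(\sigma_S^2-\mu_S)/n$ in the logarithm; (ii) configurations with three or more simultaneously interacting placements contribute $o(1)$ to the log-factorial-moment, uniformly in $r$; and (iii) the polynomial Stirling factors $(m_i/n)^{-3/2}$ aggregate, in the sum over placements, into the Gaussian factor with the correct sign and constant. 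Carrying out (i)--(iii) uniformly in $r$ up to the Gao--Wormald threshold is the technical heart of the proof, and is what forces the separate treatment of the simpler cases \refT{T1} and \refT{Tstrong} from the remaining case in \refSS{SSweak}.
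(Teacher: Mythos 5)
Your overall strategy -- Gao--Wormald applied to high factorial moments, with the count of completions factorizing over the faces cut out by the placed copies of $S$ into a product of Catalan numbers -- is exactly the paper's strategy, and your factorization of $N(p_1,\dots,p_r)$ is correct. However, there is a genuine error in your step (iii). The polynomial Stirling factors $(m_i/n)^{-3/2}$ do \emph{not} aggregate into the Gaussian correction: the bounded faces have sizes depending only on $S$ (so their Catalan numbers contribute a constant $K^r$ with no $n$-dependence), and the two unbounded faces have size $n-O(r)=n(1+o(1))$ for $r=O(\sqrt n)$, so their polynomial corrections are $1+o(1)$. Indeed the ratio $\Cat_{n-r}/\Cat_n\sim 2^{-2r}$ carries no Gaussian term at all in this regime. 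The factor $\exp(r^2 s_n/2)$ comes instead from the descending factorial counting the placements of $r$ disjoint bases, via $(N-m+k)_k\sim N^k\exp(-k(2m-k)/(2N))$, together with (for weak shapes) an extra $\exp(r^2\sum_i b_i/(2n))$ contributed by the $\Theta(r^2/n)\cdot\binom{r}{2}$-many $r$-tuples containing one overlapping pair. If you pursue your route of extracting the correction from Stirling asymptotics of the cell sizes you will not find it, because it is not there.

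The second, larger gap is that your points (i)--(ii) are stated as obstacles but not addressed, and they are the actual content of the proof for weak shapes. The paper handles them by grouping an $r$-tuple into blocks of mutually overlapping placements, proving (a) an upper bound $\binom{r-1}{u-1}(2\ell(S))^{r-u}F_u$ on the contribution of $r$-tuples with $u$ blocks, (b) a ratio bound $F_{u+1}/F_u\ge Qn/u$ showing that the contributions $B_{r,u}$ grow geometrically in $u$ so that all terms with $u\le r-M$ contribute at most $2^{1-M}F_r$, and (c) an exact asymptotic for the $u=r-M$ stratum showing blocks of size $\ge3$ are negligible and blocks of size $2$ produce precisely the Poisson-type sum $\sum_G\prod_i(b_ir^2/2n)^{g_i}/g_i!$ which exponentiates. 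Without an argument of this kind, uniformity of \eqref{GW} in $r$ of order $\sqrt n$ -- which is what Gao--Wormald requires -- is not established, and the identification of $s_n$ (hence $\sigma_S$) remains unproved. Finally, note that one must also verify $\mu_n s_n>-1$, which in the paper is the nontrivial inequality $K(4\ell(S)-1)<4^{2\ell(S)-c_+-c_-}$; your proposal does not mention this hypothesis at all.
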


Observe that the convergence $\frac{X_{S,n}}{n} \underset{n \rightarrow
  \infty}{\overset{\P}{\rightarrow}} \mu_S$ for some constant $\mu_S$ was
already obtained in \cite{FT22}, with an explicit expression for $\mu_S$.

\section{Preliminaries}
\subsection{More notation}\label{SSnot}
For integers $m\le n$, $\llrr{m,n}$ denotes the integer interval
$[m,n]\cap\mathbb{Z}$. The \emph{size} of $\llrr{m,n}$ is its number of
points, i.e., $n-m+1$. Note that $[n]=\llrr{1,n}$.

For a component $C$ of a meandric system,
we denote by $L_C$ ($R_C$),
the leftmost (rightmost) point in the support of $C$.
Furthermore, we say that the \emph{base}
of $C$ is the interval $\llrr{L_C,R_C}$ and let $\ell(C)$ denote the
\emph{half-length} of $C$, defined as half the size of its base, i.e.,
$\ell(C):=\frac12(R_C-L_C+1)$. (Note that $\ell(C)$ always is an integer.) 
We use the same definitions for a shape $S$; then $L_S=1$ and thus
$R_C=2\ell(S)$. 

For integers $N  \ge k \geq 0$, we let 
\begin{align}
 (N)_k := N(N-1) \cdots(N-k+1)=\frac{N!}{(N-k)!}
=k!\,\binom Nk, 
\end{align}
the $k$-th descending factorial of $N$.  

We use standard $o$ and $O$ notation. Furthermore, for two (positive)
sequences $a_n$ and $b_n$, $a_n\sim b_n$ means $a_n/b_n\to1$ as \ntoo, i.e.,
$a_n=b_n(1+o(1))$, and $a_n=\Theta(b_n)$ means that there exist constants
$c>0$ and $C$ such that $c \le a_n/b_n \le C$ for sufficiently large $n$.
Note that, for example, $a_{n,r}\sim b_{n,r}$ for $r=O(\sqrt n)$ means that 
this holds for every sequence $r=r(n)=O(\sqrt n)$, which is equivalent to
$a_{n,r}\sim b_{n,r}$ uniformly for $r\le C\sqrt n$, for any $C<\infty$;
uniformity in $r$ is thus automatic in such cases.
We write ``uniformly for $r=O(\sqrt n)$'' 
for
``uniformly for $r\le C\sqrt n$, for any $C<\infty$''.
Unspecified limits are as $n\to\infty$.

\subsection{The key tool: Gao and Wormald's theorem.}

Our proof relies on a theorem due to Gao and Wormald \cite{GW04}, stating
that we can deduce a central limit theorem for a sequence of variables from
the asymptotic behaviour of their high (factorial) moments. Let us recall this
result.

\begin{theorem}[Gao \& Wormald \cite{GW04}]
\label{thm:gw}
Let $\mu_ns_n > - 1$ 
and set $\sigma_n:= \sqrt{\mu_n + \mu_n^2 s_n}$,
where $0<\mu_n \rightarrow \infty$. Suppose that $\sigma_n=o(\mu_n)$,
$\mu_n=o(\sigma_n^3)$, and that a sequence $\{X_n \}$ of nonnegative random
variables satisfies as $n \rightarrow \infty$:  
\begin{align}\label{GW}
\E[(X_n)_r] \sim \mu_n^r \exp \left( \frac{r^2 s_n}{2} \right).
\end{align}
uniformly for all integers $r$ in the range $c\mu_n/\sigma_n \leq k \leq C \mu_n/\sigma_n$, for some constants $C>c>0$. Then $(X_n-\mu_n)/\sigma_n$ converges in distribution to the standard normal as $n \rightarrow \infty$.
\end{theorem}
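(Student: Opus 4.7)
I plan to prove pointwise convergence of the characteristic function of the normalized variable $Y_n := (X_n - \mu_n)/\sigma_n$ to $e^{-u^2/2}$, and then conclude via Lévy's continuity theorem. The bridge from factorial moments to the characteristic function is the generating-function identity
\begin{align*}
\E[z^{X_n}] = \sum_{r \geq 0} \frac{(z-1)^r}{r!}\, \E[(X_n)_r],
\end{align*}
to be specialized at $z = e^{\mathrm{i} u/\sigma_n}$ for fixed $u \in \R$. A useful guiding heuristic is that the prescribed asymptotic $\E[(X_n)_r] \sim \mu_n^r \exp(r^2 s_n/2)$ coincides with the $r$-th factorial moment of a \emph{mixed Poisson} variable $X_n^{\star}$ whose rate is $\Lambda_n = \mu_n\, e^{\sqrt{s_n}\, Z}$, $Z \sim \cN(0,1)$: indeed $\E[\Lambda_n^r] = \mu_n^r \E[e^{r\sqrt{s_n} Z}] = \mu_n^r e^{r^2 s_n/2}$. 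Under the assumptions $\sigma_n = o(\mu_n)$ and $\mu_n = o(\sigma_n^3)$, one may check directly (by decomposing $X_n^{\star} - \mu_n = (X_n^{\star} - \Lambda_n) + (\Lambda_n - \mu_n)$ and applying the conditional Poisson CLT to the first term while Taylor-expanding the log-normal fluctuations in the second) that $(X_n^{\star} - \mu_n)/\sigma_n \to \cN(0,1)$. So the task reduces to transferring this CLT from $X_n^{\star}$ to $X_n$ via the generating-function identity.

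Substituting $z = e^{\mathrm{i} u/\sigma_n}$ and noting that $(z-1)^r = (\mathrm{i} u/\sigma_n)^r (1 + O(1/\sigma_n))^r$, the series reduces at leading order to
\begin{align*}
\sum_{r \geq 0} \frac{(\mathrm{i} u\, \mu_n/\sigma_n)^r}{r!}\, e^{r^2 s_n/2}\, \bigl(1 + o(1)\bigr).
\end{align*}
I would evaluate this sum by Laplace's method. After Stirling, the logarithm of the $r$-th term has a unique interior maximum at some $r^\star$ satisfying $\log\!\bigl(u\mu_n/(r^\star \sigma_n)\bigr) + r^\star s_n \approx 0$. The two side conditions force $r^\star s_n \to 0$ and $r^\star \sim |u|\,\mu_n/\sigma_n$, which lies comfortably within the admissible window $[c\mu_n/\sigma_n, C\mu_n/\sigma_n]$ for $|u|$ in any compact set. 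A Gaussian expansion around $r^\star$, with careful bookkeeping of subleading corrections from both the $(e^{\mathrm{i} u/\sigma_n} - 1)^r$ factor and Stirling, should yield
\begin{align*}
\E\bigl[e^{\mathrm{i} u X_n/\sigma_n}\bigr] \sim \exp\!\Bigl( \mathrm{i} u\, \mu_n/\sigma_n \,-\, u^2/2 \,+\, o(1) \Bigr),
\end{align*}
which after multiplication by $e^{-\mathrm{i} u \mu_n/\sigma_n}$ gives the standard normal characteristic function.

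The main obstacle is truncating the generating-function series to the window in which the factorial-moment asymptotic is available: the sum ranges over all $r \geq 0$, whereas the hypothesis controls $\E[(X_n)_r]$ only for $r \in [c\mu_n/\sigma_n, C\mu_n/\sigma_n]$. For $r$ well below $r^\star$ the factor $(u\mu_n/\sigma_n)^r/r!$ already decays sharply once $r$ drops below $r^\star$, so only crude bounds on $\E[(X_n)_r]$ are needed there, obtained for instance from Lyapunov-type inequalities for factorial moments. The delicate part is the upper tail $r \gg r^\star$, where one must combine the rapid decay of $1/r!$ with an a priori upper bound of the form $\E[(X_n)_r] = O\bigl(\mu_n^r e^{r^2 s_n/2} r^{O(1)}\bigr)$ propagated from the boundary of the window via Cauchy--Schwarz or Markov-type manipulations on factorial moments. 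This uniform tail control, rather than the saddle-point analysis itself, is where the technical weight of the Gao--Wormald theorem ultimately resides.
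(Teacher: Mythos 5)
The paper does not prove this statement: it is quoted verbatim (as \refT{thm:gw}) from Gao and Wormald \cite{GW04} and used as a black box, so there is no internal proof to compare yours against. Judged on its own terms, your proposal has a fatal gap.

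The characteristic-function route through $\E[z^{X_n}]=\sum_r \frac{(z-1)^r}{r!}\E[(X_n)_r]$ with $z=e^{\mathrm{i}u/\sigma_n}$ cannot work with the information provided by \eqref{GW}. Near your saddle $r^\star\sim|u|\mu_n/\sigma_n$ the individual terms have modulus of order $(|u|\mu_n/\sigma_n)^{r^\star}/r^\star!\approx e^{|u|\mu_n/\sigma_n}$ (up to polynomial factors), while the sum you are trying to compute has modulus at most $1$; the series converges only through massive oscillatory cancellation. The hypothesis controls $\E[(X_n)_r]$ only up to a multiplicative $1+o(1)$, and an error of size $o(1)\cdot e^{|u|\mu_n/\sigma_n}$ obliterates an $O(1)$ answer. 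No amount of ``careful bookkeeping'' in a Laplace/saddle-point expansion recovers the sum from terms known only to relative precision $o(1)$ when those terms are exponentially larger than the sum; this is precisely why asymptotic normality from high factorial moments is a genuinely delicate theorem rather than a routine generating-function computation. (A real-exponential version $z=e^{t/\sigma_n}$, $t>0$, avoids the cancellation, but then the peak $r^\star\sim t\mu_n/\sigma_n$ only lies in the admissible window for $t$ bounded away from $0$, so you would obtain the moment generating function on an interval not containing the origin, which does not by itself yield convergence in distribution.) Two further problems: your proposed upper-tail control of $\E[(X_n)_r]$ for $r$ above the window via Cauchy--Schwarz goes the wrong way --- log-convexity of factorial moments yields \emph{lower} bounds outside the window from values inside it, not the upper bounds you need, and the hypotheses genuinely say nothing about those moments. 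And your guiding mixed-Poisson heuristic $\Lambda_n=\mu_n e^{\sqrt{s_n}Z}$ requires $s_n\ge0$, whereas in every application in this paper $s_n<0$ (see \eqref{sn}), so the comparison variable $X_n^\star$ does not exist in the relevant regime.

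For the record, the actual argument of \cite{GW04} does not pass through the characteristic function at all: it works directly with the identity $\E[(X_n)_k]=\sum_j (j)_k\,\P(X_n=j)$, exploiting the monotonicity of $j\mapsto (j)_k$ to show that for $k$ in the window the sum is dominated by $j$ in a $k$-dependent range near $\mu_n e^{ks_n}$, and then varies $k$ across the window to pin down $\P(X_n\ge\mu_n+x\sigma_n)\to 1-\Phi(x)$ pointwise. That squeeze on the distribution function is where the restriction of $k$ to the window $[c\mu_n/\sigma_n,C\mu_n/\sigma_n]$ enters naturally, and it sidesteps the cancellation problem entirely because all quantities involved are nonnegative.
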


In other words, if  high factorial moments of a variable asymptotically
match those of a normal distribution, then convergence to the normal
distribution holds.

%

\subsection{Some lemmas}
We state some simple lemmas that will be used later.
The first is a  well known estimate that we often will  use in the sequel.
\begin{lemma}\label{Lfact}
  \begin{romenumerate}
  \item 
  If\/ $0\le k\le n/2$, then
  \begin{align}\label{lfact1}
    (n)_k = n^k\exp\Bigpar{-\frac{k^2}{2n}+O\Bigpar{\frac{k^3}{n^2}+\frac{k}{n}}}
.  \end{align}
\item 
In particular, if\/ $k=O\xpar{\sqrt n}$, then
  \begin{align}\label{lfact2}
    (n)_k = n^k\exp\Bigpar{-\frac{k^2}{2n}+o(1)}
\sim n^k\exp\Bigpar{-\frac{k^2}{2n}}
.  \end{align}
\item 
More generally, if $0\le k\le m$ with $m=O(\sqrt n)$, then
  \begin{align}\label{lfact3}
    (n-m+k)_k \sim n^k\exp\Bigpar{-\frac{m^2-(m-k)^2}{2n}}
= n^k\exp\Bigpar{-\frac{k(2m-k)}{2n}}
.  \end{align}
  \end{romenumerate}
\end{lemma}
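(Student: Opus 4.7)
The plan is to take logarithms and apply a Taylor expansion to each factor, then sum.

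For part (i), write
\[
\log (n)_k = \sum_{j=0}^{k-1} \log(n-j) = k\log n + \sum_{j=0}^{k-1} \log\Bigpar{1-\tfrac{j}{n}}.
\]
Since $k\le n/2$, each $j/n\le 1/2$, so I can use $\log(1-x)=-x-x^2/2+O(x^3)$ uniformly for $|x|\le 1/2$. Summing gives
\[
\sum_{j=0}^{k-1}\log\Bigpar{1-\tfrac{j}{n}} = -\frac{1}{n}\sum_{j=0}^{k-1} j - \frac{1}{2n^2}\sum_{j=0}^{k-1} j^2 + O\Bigpar{\tfrac{1}{n^3}\sum_{j=0}^{k-1} j^3}.
\]
Using $\sum_{j=0}^{k-1} j = \tfrac{k(k-1)}{2}=\tfrac{k^2}{2}+O(k)$, $\sum j^2=O(k^3)$, $\sum j^3=O(k^4)$, the right side equals $-\tfrac{k^2}{2n}+O\bigpar{\tfrac{k}{n}+\tfrac{k^3}{n^2}+\tfrac{k^4}{n^3}}$. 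Since $k\le n/2$, $k^4/n^3\le \tfrac12 k^3/n^2$, so the $k^4/n^3$ term can be absorbed. Exponentiating yields \eqref{lfact1}.

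Part (ii) is an immediate specialization: if $k=O(\sqrt n)$, then $k/n=O(n^{-1/2})$ and $k^3/n^2=O(n^{-1/2})$, both $o(1)$, so \eqref{lfact2} follows from \eqref{lfact1}.

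For part (iii), reindex the product: since
\[
(n-m+k)_k = \prod_{j=1}^{k}(n-m+j) = \prod_{s=m-k}^{m-1}(n-s),
\]
I get
\[
\log (n-m+k)_k = k\log n + \sum_{s=m-k}^{m-1}\log\Bigpar{1-\tfrac{s}{n}}.
\]
With $m=O(\sqrt n)$, every $s/n=O(n^{-1/2})$, so the same Taylor expansion applies and the sub-leading terms contribute $O(m^3/n^2)+O(m^4/n^3)=o(1)$. The leading sum is
\[
-\frac{1}{n}\sum_{s=m-k}^{m-1} s = -\frac{k(2m-k-1)}{2n} = -\frac{k(2m-k)}{2n}+O\Bigpar{\tfrac{k}{n}},
\]
and the $O(k/n)$ is again $o(1)$. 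Since $k(2m-k)=m^2-(m-k)^2$, exponentiation gives \eqref{lfact3}.

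The whole argument is essentially bookkeeping around $\log(1-x)$; the only point that needs a moment's care is in (i), where one must confirm that $k\le n/2$ is exactly what makes the $O(k^4/n^3)$ tail absorbable into $O(k^3/n^2)$, so that the three error shapes in the statement suffice. After that, (ii) and (iii) are essentially one-line specializations.
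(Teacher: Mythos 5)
Your proof is correct, and for parts (i) and (ii) it is exactly the argument the paper sketches (a Taylor expansion of $\log(1-j/n)$, with the details it omits filled in, including the right observation that $k\le n/2$ lets $O(k^4/n^3)$ be absorbed into $O(k^3/n^2)$). For (iii) the paper instead writes $(n-m+k)_k=(n)_m/(n)_{m-k}$ and applies (ii) twice, whereas you reindex the product and expand directly; these are the same computation organized differently, and both are fine.
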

\begin{proof}
(i), (ii):
  This follows easily from a Taylor expansion of $\log(1-i/n)$ for $0\le i<k$;
we omit the details.

(iii): This follows from (ii) and $(n-m+k)_k = (n)_m/(n)_{m-k}$.
\end{proof}

As one consequence, we obtain the following asymptotics.
\begin{lemma}\label{Lcat}
  Let $n\to\infty$ and $0\le r=O\xpar{\sqrt n}$. Then
  \begin{align}
    \frac{\Cat_{n-r}}{\Cat_n} \sim 2^{-2r}.
  \end{align}
\end{lemma}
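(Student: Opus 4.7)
The plan is to expand the ratio $\Cat_{n-r}/\Cat_n$ into a product of three descending factorials and then apply \refL{Lfact}(ii) to each one, verifying that the exponential corrections cancel, leaving only the main term $2^{-2r}$.

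Concretely, using \eqref{cat}, a direct manipulation gives
\begin{align*}
  \frac{\Cat_{n-r}}{\Cat_n}
  = \frac{(2n-2r)!}{(n-r)!\,(n-r+1)!} \cdot \frac{n!\,(n+1)!}{(2n)!}
  = \frac{(n)_r \,(n+1)_r}{(2n)_{2r}}.
\end{align*}
Since $r = O(\sqrt n)$, we have $2r = O(\sqrt n)$ as well, so \refL{Lfact}(ii) applies to all three factorials:
\begin{align*}
  (n)_r &\sim n^r \exp\Bigpar{-\tfrac{r^2}{2n}}, \\
  (n+1)_r &\sim (n+1)^r \exp\Bigpar{-\tfrac{r^2}{2(n+1)}} \sim n^r \exp\Bigpar{-\tfrac{r^2}{2n}}, \\
  (2n)_{2r} &\sim (2n)^{2r} \exp\Bigpar{-\tfrac{(2r)^2}{4n}} = 4^r n^{2r} \exp\Bigpar{-\tfrac{r^2}{n}}.
\end{align*}
For the second line I would use that $(1+1/n)^r = \exp(r/n + O(r/n^2)) = 1+o(1)$ when $r = O(\sqrt n)$, and similarly that $r^2/(2(n+1)) = r^2/(2n) + o(1)$.

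Taking the ratio, the numerator contributes $n^{2r} \exp(-r^2/n)$ and the denominator contributes $4^r n^{2r} \exp(-r^2/n)$, so the two exponential factors cancel exactly and the $n^{2r}$ factors cancel, leaving $\Cat_{n-r}/\Cat_n \sim 4^{-r} = 2^{-2r}$.

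There is no real obstacle here; the only point requiring minimal care is checking that the small error term $o(1)$ in each application of \refL{Lfact}(ii) is indeed uniform for $r = O(\sqrt n)$, which is exactly what is noted in \refS{SSnot} about uniformity being automatic in such estimates.
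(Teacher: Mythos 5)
Your proposal is correct and follows essentially the same route as the paper: write $\Cat_{n-r}/\Cat_n = (n)_r(n+1)_r/(2n)_{2r}$ and apply Lemma~\ref{Lfact}(ii) to each factor, observing that the exponential corrections $\exp(-r^2/n)$ cancel and only $4^{-r}=2^{-2r}$ remains. The paper simply replaces $(n+1)_r$ by $(n)_r$ at the outset and combines the exponents in one display, but this is the same computation.
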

\begin{proof}
  The definition \eqref{cat} and \refL{Lfact} yield
  \begin{align}
    \frac{\Cat_{n-r}}{\Cat_n} = \frac{(n)_r(n+1)_r}{(2n)_{2r}}
\sim \frac{(n)_r^2}{(2n)_{2r}}
=\frac{n^{2r}}{(2n)^{2r}}\exp\Bigpar{-2\frac{r^2}{2n}+\frac{(2r)^2}{4n}+o(1)}
\sim 2^{-2r}.
  \end{align}
\end{proof}

We end this section with another elementary and well known result.
\begin{lemma}
\label{lem:binomial}
Let $m, n, k \geq 1$. 
The number of unordered $k$-tuples of disjoint intervals of
size $m$ in $[n]$ is given by 
\begin{align}
\binom{n-k(m-1)}{k}.
\end{align}
\end{lemma}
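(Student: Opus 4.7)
The plan is to set up a bijection between unordered $k$-tuples of pairwise disjoint intervals of size $m$ contained in $[n]$ and $k$-element subsets of a suitably shortened interval, and then read off the count as a binomial coefficient.

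First, I would parametrize each size-$m$ interval in $[n]$ by its leftmost endpoint $a$: such an interval is $\llrr{a,a+m-1}$, which lies in $[n]$ iff $1\le a\le n-m+1$. An unordered $k$-tuple of disjoint size-$m$ intervals is therefore equivalent to a set of $k$ distinct endpoints that I can order as $a_1<a_2<\cdots<a_k$, and the disjointness of consecutive intervals is equivalent to the spacing condition $a_{i+1}\ge a_i+m$ for every $1\le i\le k-1$, together with $1\le a_1$ and $a_k+m-1\le n$.

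Next, I would apply the standard gap-removing substitution $b_i:=a_i-(i-1)(m-1)$ for $1\le i\le k$. Under this change of variables, the condition $a_{i+1}-a_i\ge m$ becomes $b_{i+1}-b_i\ge 1$, i.e.\ $b_1<b_2<\cdots<b_k$, while the boundary conditions become
\begin{align}
1\le b_1\quad\text{and}\quad b_k=a_k-(k-1)(m-1)\le (n-m+1)-(k-1)(m-1)=n-k(m-1).
\end{align}
Thus the map $(a_1,\dots,a_k)\mapsto(b_1,\dots,b_k)$ is a bijection onto the set of strictly increasing $k$-tuples in $\llrr{1,n-k(m-1)}$, or equivalently onto the $k$-element subsets of this interval.

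The count of such $k$-subsets is $\binom{n-k(m-1)}{k}$, which yields the claim. There is no real obstacle here; the only thing to be careful with is the boundary bookkeeping (the interval $\llrr{a,a+m-1}$ must fit inside $[n]$, hence the $-m+1$ shift before the substitution), but this is straightforward.
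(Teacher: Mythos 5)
Your proof is correct and is essentially the paper's argument in explicit coordinates: the paper deletes all but the leftmost point of each chosen interval to get a bijection with $k$-subsets of $[n-k(m-1)]$, which is exactly your gap-removing substitution $b_i=a_i-(i-1)(m-1)$. No issues.
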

\begin{proof}
  By deleting all points except the leftmost in each chosen interval,
we obtain a bijection between the set of such $k$-tuples of intervals and
the set of $k$-tuples of distinct points in $[n-k(m-1)]$.
\end{proof}

\section{A first example: components of half-length $1$.}
\label{S1}

As a warm-up, we consider first the simple case where $S$ is the loop of half-length
$1$. For any $i \in [2n]$, we let $Y_i$ be the indicator that the following
holds: 

\begin{center}
\begin{tikzpicture}[scale=1, font=\small]
\draw (0.7,0) -- (2.3,0);
\draw (1,.1) -- (1,-.1) (2,.1) -- (2,-.1) ;
\draw (2,0) arc (0:360:.5);
\draw (.8,-.2) node{$i$};
\draw (2.4,-.2) node{$i+1$};
\end{tikzpicture}
\end{center}

Then, 
\begin{align}
  \label{ab0}
X_{S,n} = \sum_{i=1}^{2n-1} Y_i
\end{align}
and thus,
for every $r\ge1$,
summing over $1\le i_1<\dots < i_r<2n$,
\begin{align}\label{ab1}
  \E[(X_{S,n})_r] &= 
\E \biggsqpar{r!\sum_{i_1<\dots<i_r} Y_{i_1}\dotsm Y_{i_r}}
=r!\sum_{i_1<\dots<i_r} \E \bigsqpar{Y_{i_1}\dotsm Y_{i_r}}.
\end{align}
The expectation in the last sum is non-zero if and only if the $r$ subintervals
$\llrr{i_j,i_j+1}$ of $\llrr{1,2n}$ are disjoint, so by \refL{lem:binomial}
there are 
$\binom{2n-r}{r}$ non-zero terms. Each of the non-zero terms is $1/\Cat_n^2$
times the number of meandric systems of size $n$ that contain $r$ given
loops of half-length $1$; by deleting these loops (and the vertices in
them), we obtain a bijection between such meandric systems and the meandric
systems of size $n-r$, and hence the number of them is $\Cat_{n-r}^2$.
Consequently, \eqref{ab1} yields
\begin{align}\label{ab2}
\E[(X_{S,n})_r] &= 
(2n-r)_r \frac{\Cat_{n-r}^2}{\Cat_n^2} 
= \frac{(2n)_{2r}}{(2n)_r}\cdot \frac{\Cat_{n-r}^2}{\Cat_n^2} 
.\end{align}
In particular, using \refLs{Lfact} and \ref{Lcat},
if $r=O\xpar{\sqrt n}$, then
\begin{align}\label{ab3}
  \E[(X_{S,n})_r] 
\sim (2n)^{2r-r}\exp\Bigpar{-\frac{4r^2}{4n}+\frac{r^2}{4n}}2^{-4r}
=
\left( \frac{n}{8} \right)^r \exp \left( -\frac{3r^2}{4n} \right)
.\end{align}
In other words,
\eqref{GW} holds (uniformly) for $0\le r\le C\sqrt n$, for any fixed
$C<\infty$,
with 
\begin{align}
  \mu_n&:=\frac{n}8,
\\
s_n&:=-\frac{3}{2n}.
\end{align}
We have $\mu_ns_n=-3/16>-1$, and thus
\begin{align}
  \sigma_n:=\sqrt{\mu_n(1+\mu_ns_n)}=\sqrt{\frac{13}{128} n}.
\end{align}
We thus have $\sigma_n=o(\mu_n)$ and $\mu_n=o(\sigma_n^3)$, and consequently
\refT{thm:gw} applies and yields:

\begin{theorem}\label{T1}
  If\/ $S$ is a simple loop of half-length 1, then
\begin{align}
\frac{X_{S,n}-n/8}{\sqrt{13 n/128}} \underset{n \rightarrow \infty}{\overset{(d)}{\longrightarrow}} \cN(0,1).
\end{align}
\end{theorem}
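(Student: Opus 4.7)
The plan is to apply the Gao--Wormald theorem (\refT{thm:gw}), so the core task is to compute the factorial moments $\E[(X_{S,n})_r]$ asymptotically for $r=O(\sqrt n)$ and check they have the form \eqref{GW}. To this end, I would first write $X_{S,n}=\sum_{i=1}^{2n-1} Y_i$, where $Y_i$ is the indicator that positions $i,i+1$ are joined by a loop of half-length $1$ (one of the two arcs lying above the axis, the other below). Expanding the descending factorial,
\begin{align*}
\E[(X_{S,n})_r] = r!\sum_{i_1<\dots<i_r}\E\bigsqpar{Y_{i_1}\cdots Y_{i_r}},
\end{align*}
so the problem reduces to counting meandric systems containing $r$ prescribed half-length-$1$ loops on chosen vertex-pairs.

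Second, I would note that the product $Y_{i_1}\cdots Y_{i_r}$ can only be nonzero when the intervals $\llrr{i_j,i_j+1}$ are pairwise disjoint, which by \refL{lem:binomial} (applied with $m=2$) yields $\binom{2n-r}{r}$ admissible $r$-tuples. For each admissible tuple, the $r$ specified loops can be removed from a meandric system of size $n$, and this ``contraction'' gives a bijection with the meandric systems of size $n-r$; hence each nonzero term equals $\Cat_{n-r}^2/\Cat_n^2$. Combining,
\begin{align*}
\E[(X_{S,n})_r] = (2n-r)_r\,\frac{\Cat_{n-r}^2}{\Cat_n^2} = \frac{(2n)_{2r}}{(2n)_r}\cdot\frac{\Cat_{n-r}^2}{\Cat_n^2}.
\end{align*}

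Third, I would invoke \refL{Lfact} and \refL{Lcat}, valid uniformly for $r=O(\sqrt n)$, to obtain
\begin{align*}
\E[(X_{S,n})_r]\sim\left(\frac{n}{8}\right)^r\exp\!\left(-\frac{3r^2}{4n}\right),
\end{align*}
matching \eqref{GW} with $\mu_n=n/8$ and $s_n=-3/(2n)$. Finally, I would verify the hypotheses of \refT{thm:gw}: $\mu_n s_n=-3/16>-1$, so $\sigma_n=\sqrt{\mu_n(1+\mu_n s_n)}=\sqrt{13n/128}$ is well-defined, and clearly $\sigma_n=o(\mu_n)$ and $\mu_n=o(\sigma_n^3)$. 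The conclusion of \refT{thm:gw} then gives the stated central limit theorem.

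The only nontrivial step is the bijective deletion argument underlying the probability $\Cat_{n-r}^2/\Cat_n^2$; the remaining arithmetic with Catalan numbers and descending factorials is routine. Since half-length-$1$ loops are maximally simple (they touch only two consecutive vertices, one arc above and one below), deletion straightforwardly produces an arbitrary meandric system of size $n-r$, so no genuine obstacle arises in this warm-up case — the analogous counting will be the main difficulty in the general theorem.
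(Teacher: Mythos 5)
Your proposal is correct and follows essentially the same route as the paper: the same indicator decomposition, the same count $\binom{2n-r}{r}$ of nonzero terms via \refL{lem:binomial}, the same deletion bijection giving $\Cat_{n-r}^2/\Cat_n^2$ per term, and the same application of \refLs{Lfact} and \ref{Lcat} followed by \refT{thm:gw} with $\mu_n=n/8$ and $s_n=-3/(2n)$. No gaps.
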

This is \refT{thm:main} for this particular choice of $S$, with $\mu_S=1/8$ and
$\gss_S=13/128$. 

\section{Extension to any fixed shape}\label{SS}

Let us now show how we can extend this result to any fixed shape
$S$. 
We now let $Y_i$ be the indicator that
there is a component $C$ of shape $S$ such that $L_C=i$;
note that \eqref{ab0} and \eqref{ab1} still hold.

Recall that $\ell(S)$ is the half-length of $S$, so $S$ has base
$\llrr{1,2\ell(S)}$.
We also define here three other constants $K(S), c_+(S), c_-(S)$ depending
on $S$. To avoid heavy notation, we will drop the argument $S$ in what follows,
and only denote them by $K,c_+, c_-$. 


\begin{definition}
\label{def:constants}
(See an example in Figure \ref{fig:meander_faces}.)
Observe that a component $C$ of shape $S$, taken along with the horizontal
axis, splits the plane into two unbounded faces, each belonging to one of
the half-planes, and a certain number of bounded faces. Let $F_+$ denote the
unbounded face in the upper half-plane, $F_-$ the one in the lower
half-plane, and $\cF(C)$ the set of bounded faces. 
For a face $F$, let $\vI(F)$ be the number of vertices in $\llrr{L_C,R_C}$
that lie on the boundary of $F$ but \emph{not} on $C$,
and observe that necessarily $\vI(F)$ is even. We then set
\begin{align}
K(S) &:= \prod_{F \in \cF(C)} \Cat_{\vI(F)/2},\label{ssk}\\
c_+(S)& := \vI(F_+)/2,\label{ssc+}\\
c_-(S)& := \vI(F_-)/2.\label{ssc-}
\end{align}

Note that these constants do not depend on the set of vertices on which $C$
is defined, but only on its shape $S$.
\end{definition}

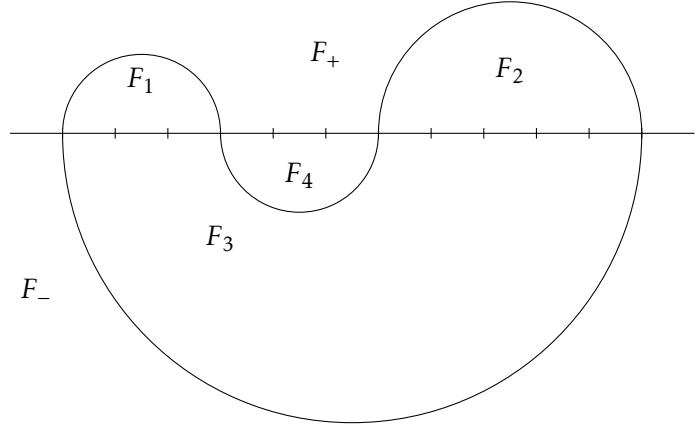
\begin{figure}[!ht]
  \[
\begin{tikzpicture}[scale=.7,font=\small]
\draw (0,0) -- (13,0);
\draw (1,.1) -- (1,-.1) (2,.1) -- (2,-.1) (3,.1) -- (3,-.1) (4,.1) -- (4,-.1) (5,.1) -- (5,-.1) (6,.1) -- (6,-.1) (7,.1) -- (7,-.1) (8,.1) -- (8,-.1) (9,.1) -- (9,-.1) (10,.1) -- (10,-.1) (11,.1) -- (11,-.1) (12,.1) -- (12,-.1);

\draw (4,0) arc (0:180:1.5);
\draw (12,0) arc(0:180:2.5);
\draw (1,0) arc (180:360:5.5);

\draw (4,0) arc (180:360:1.5);
\draw (6,1.5) node{$F_+$};
\draw (.5,-3) node{$F_-$};
\draw (4,-2) node{$F_3$};
\draw (2.5,1) node{$F_1$}; 
\draw (9.5,1.2) node{$F_2$}; 
\draw (5.5,-.8) node{$F_4$}; 
\end{tikzpicture} 
  \]
  \caption{A component $C$ with four bounded faces $F_1, F_2, F_3, F_4$.
In this example, we have $K(S)=\Cat_1^2 \Cat_2 \Cat_3=10$, $c_+(S)=1$ and
$c_-(S)=0$, where $S$ is the shape of $C$.} 
  \label{fig:meander_faces}
\end{figure}

\subsection{Strong shapes}
We say that two components \emph{overlap} if their bases overlap.
Hence, if the components have the same shape $S$, and the leftmost points in
their supports are $i$ and $j$, they overlap if $|j-i|<2\ell(S)$.

For simplicity, we study first the case when this cannot happen.
We say that a shape $S$ is \emph{strong} if two different components of a
meandric system that both have shape $S$ cannot overlap.
Thus, if $S$ is strong, then $Y_iY_j=0$ when $|j-i|<2\ell(S)$.
The simple loop in \refS{S1} and the loop in Figure \ref{fig:meander_faces} are examples of strong shapes. A shape that is not strong is called \emph{weak}; an example is
given in Figure \ref{fig:overlapping}.

\begin{proposition}
\label{prop:constants}
Let $S$ be a strong shape of half-length $\ell(S)$. Then, for all $r \geq 1$, we have
\begin{equation}
\label{eq:equality}
\E[(X_{S,n})_r] 
=  
\bigpar{2n-2r\ell(S)+r}_{r} \, 
K^r \, \frac{\Cat_{n-r\ell(S) + r c_+} \Cat_{n- r \ell(S) + r c_-}}{\Cat_n^2} 
.\end{equation}
\end{proposition}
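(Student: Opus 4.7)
The plan is to compute the sum in \eqref{ab1} by direct enumeration. Strongness of $S$ gives $Y_iY_j=0$ whenever $|i-j|<2\ell(S)$, so the only surviving terms in \eqref{ab1} correspond to tuples $i_1<\cdots<i_r$ for which the $r$ bases $\llrr{i_j,i_j+2\ell(S)-1}$ are pairwise disjoint subintervals of $\llrr{1,2n}$. By \refL{lem:binomial} applied with interval length $m=2\ell(S)$ inside $[2n]$, the number of such tuples is $\binom{2n-r(2\ell(S)-1)}{r}$, and multiplying by the $r!$ in \eqref{ab1} produces exactly the factor $(2n-2r\ell(S)+r)_r$ appearing in \eqref{eq:equality}.

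For a fixed valid tuple, $\E\bigsqpar{Y_{i_1}\cdots Y_{i_r}}$ equals $1/\Cat_n^2$ times the number of meandric systems of size $n$ in which, for each $j$, there is a component of shape $S$ with leftmost vertex $i_j$. To count the latter, I would fix the $r$ prescribed loops $C_1,\dots,C_r$ (whose arcs are forced by $S$ and by the $i_j$) and argue that, together with the horizontal axis, they partition the plane into the bounded faces of Definition~\ref{def:constants} for each $C_j$, plus two globally unbounded faces obtained by gluing the individual $F_+(C_j)$ (respectively $F_-(C_j)$) through the axis pieces lying outside the bases. Completing the meandric system then amounts to choosing, independently in each region, a non-crossing matching of the free axis-vertices on its boundary.

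Region by region: each bounded face $F$ of each $C_j$ carries $\vI(F)$ free vertices and admits $\Cat_{\vI(F)/2}$ non-crossing matchings, so the product over all bounded faces of all $r$ components equals $\prod_{j=1}^r\prod_{F\in\cF(C_j)}\Cat_{\vI(F)/2}=K^r$. The global upper face carries the $2n-2r\ell(S)$ axis-vertices lying outside the bases together with the $2rc_+$ free base-vertices accessible from above, contributing $\Cat_{n-r\ell(S)+rc_+}$ non-crossing matchings; similarly the global lower face contributes $\Cat_{n-r\ell(S)+rc_-}$. Multiplying these counts together with the tuple count and dividing by $\Cat_n^2$ yields precisely \eqref{eq:equality}.

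The main obstacle is justifying the face-decomposition step: one must check that each face is a topological disk whose boundary meets the axis in segments carrying an even number of free vertices (this parity is already built into Definition~\ref{def:constants}), and that the choices of non-crossing matchings in distinct faces are independent and exhaust all valid completions. Strongness is essential here: pairwise disjoint bases ensure that the regions listed above are genuinely separate and have disjoint boundary vertex sets, so the completions factor as claimed; in the weak case, interlocking bases couple the matchings across components, which is why that case is deferred to \refSS{SSweak}.
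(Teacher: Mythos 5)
Your proposal is correct and follows essentially the same route as the paper: strongness reduces the sum in \eqref{ab1} to tuples with disjoint bases, counted via \refL{lem:binomial}, and each non-zero term is computed by filling the bounded faces ($K^r$ ways) and the two glued unbounded faces ($\Cat_{n-r\ell(S)+rc_\pm}$ ways each) independently. You simply spell out the face-decomposition and independence argument in more topological detail than the paper does.
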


\begin{proof}
We argue as in \refS{S1}.
As noted above, \eqref{ab1} still holds, and 
since $S$ is strong, 
we have $Y_iY_j=0$ when $|j-i|<2\ell(S)$.
Hence, the number of non-zero terms in \eqref{ab1} is
$\binom{2n-r(2\ell(S)-1)}{r}$ by \refL{lem:binomial}.
Again, all non-zero terms have the same value, which is $1/\Cat_n^2$ times
the number of ways that $r$ given disjoint loops of shape $S$ can be
completed to a meandric system of size $n$.
We can fill in the bounded faces of each component in $K$ ways,
and there are $2n-2r\ell(S)+2rc_\pm$ vertices left in the upper and lower
components, respectively, so they may be filled in in
$\Cat_{n-r\ell(S)+rc_\pm}$ ways. This yields \eqref{eq:equality}.
\end{proof}

By \refLs{Lfact} and \ref{Lcat}, it follows from \eqref{eq:equality}
that, (uniformly) for $r=O\xpar{\sqrt{n}}$, we have
\begin{align}\label{tor}
\E[(X_{S,n})_r] \underset{n \rightarrow \infty}{\sim} 
\left( \frac{2n K}{4^{2\ell(S)-c_+-c_-}} \right)^r 
\exp \left( -\frac{r^2}{4n} \left[ (2\ell(S))^2 - (2\ell(S)-1)^2 \right] \right)
.\end{align}
This is \eqref{GW} with 
\begin{align}\label{mun}
  \mu_n&:=\frac{2n K}{4^{2\ell(S)-c_+-c_-}},
\\\label{sn}
s_n&:
=-\frac{(2\ell(S))^2-(2\ell(S)-1)^2)}{2n}
=-\frac{4\ell(S)-1}{2n}.
\end{align}

In order to apply Theorem \ref{thm:gw}, we need to check that $\mu_ns_n >-1$,
which boils down to the following.
\begin{lemma}\label{LK}
  We have
\begin{equation}
\label{eq:K}
K (4\ell(S)-1) < 4^{2\ell(S)-c_+-c_-} .
\end{equation}
\end{lemma}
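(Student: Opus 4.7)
The plan is to bound $K$ using the classical Catalan estimate $\Cat_k\le 4^k/(k+1)$ and then reduce the lemma to a short polynomial inequality, which follows from a structural identity relying on the connectedness of the loop. Fix a component $C$ of shape $S$, set $s:=|\mathrm{supp}(C)|$ (an even integer with $s\ge 2$) and $M:=\sum_{F\in\cF(C)}m_F$, where $m_F:=\vI(F)/2$. Each non-support vertex of the base lies on the boundary of exactly one upper face and one lower face, so a double count yields the identity $M+c_++c_-=2\ell(S)-s$; equivalently $2\ell(S)-c_+-c_-=M+s$. Applying $\Cat_k\le 4^k/(k+1)$ factorwise to $K=\prod_F\Cat_{m_F}$ together with the elementary bound $\prod_F(m_F+1)\ge 1+\sum_F m_F=M+1$ (iterated $(1+x)(1+y)\ge 1+x+y$) gives $K\le 4^M/(M+1)$, so the lemma reduces to
\[
4\ell(S)-1\ <\ 4^s(M+1).
\]

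The core combinatorial ingredient is the structural inequality $M\ge c_++c_-$. To prove it, classify each non-support base vertex according to which upper and lower faces contain it on their boundary, denoting the four counts by $a,b,c,d$ for the combinations (unbounded/unbounded), (unbounded/bounded), (bounded/unbounded), (bounded/bounded). A short count yields $2c_+=a+b$, $2c_-=a+c$, $2M=b+c+2d$, so $M-(c_++c_-)=d-a$, and it is enough to show $a=0$. Suppose $v$ were a non-support base vertex in $F_+\cap F_-$. Then no arc of $C$ has its two endpoints on opposite sides of $v$, so viewing the arcs of $C$ as edges in a multigraph on $\mathrm{supp}(C)$, the support points to the left of $v$ and to the right of $v$ lie in two disjoint components, with no edge joining them. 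This contradicts the fact that $C$ is a single closed curve (i.e.\ an Eulerian cycle in that multigraph) whose support does contain points on both sides of $v\in(L_C,R_C)$. Hence $a=0$, and combining $M\ge c_++c_-$ with the identity above yields $M\ge\ell(S)-s/2$.

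Substituting $M+1\ge \ell(S)-s/2+1$ it suffices to check $4\ell(S)-1<4^s(\ell(S)-s/2+1)$. Viewed as a linear function of $\ell(S)$, the difference of the two sides has positive slope $4^s-4$ for $s\ge 2$, so the inequality need only be verified at the minimal value $\ell(S)=s/2$; there it reads $2s-1<4^s$, which holds comfortably for every $s\ge 2$. The only non-routine step is the structural bound $M\ge c_++c_-$: this is where the hypothesis that $C$ is a single connected loop is really used, and is the main (though short) obstacle in the proof.
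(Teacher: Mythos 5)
Your proof is correct, and it takes a genuinely different route from the paper's, even though both open with the same bound $\Cat_k\le 4^k/(k+1)$. The paper gets its saving factor of order $\ell(S)$ by merging all bounded faces of each half-plane into one (implicitly using $\Cat_a\Cat_b\le\Cat_{a+b}$), bounding $K\le \Cat_{\ell(S)-c_+-1}\Cat_{\ell(S)-c_--1}$, and then invoking the single structural fact $c_++c_-\le\ell(S)-1$ (a base vertex cannot lie on the boundary of both unbounded faces, plus $|\mathrm{supp}(C)|\ge2$). You instead keep the faces separate, bound $K\le 4^M/(M+1)$ with $M=\sum_{F\in\cF(C)}\vI(F)/2$, and extract the needed lower bound on $M$ from the exact double-counting identity $M+c_++c_-=2\ell(S)-|\mathrm{supp}(C)|$ together with the structural inequality $M\ge c_++c_-$; the latter rests on exactly the same connectivity observation ($a=0$, i.e.\ no non-support base vertex is visible from both above and below) that the paper uses, just deployed more quantitatively. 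Your identities check out against the paper's Figure of a four-face component ($M=7$, $c_++c_-=1$, $s=4$, $\ell(S)=6$), and the final verification $4\ell(S)-1<4^s(\ell(S)-s/2+1)$ is sound since $\ell(S)\ge s/2$ and $s\ge2$. What your version buys is that it avoids the paper's slightly glossed step ``$K$ is largest if there is only one bounded face in each half-plane'' (which needs superadditivity of Catalan numbers) and replaces it with an explicit face count; the cost is that the argument is longer and the connectivity lemma has to be made precise, whereas the paper only needs its qualitative corollary $c_++c_-\le\ell(S)-1$.
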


\begin{proof}
Observe that we can bound $K$ using the fact that $\Cat_n \leq
\frac{4^n}{n+1}$ for all $n$:
It is easy to see that for given $c_\pm$, $K$ is largest if there is only
one bounded face in each half-plane, and thus, 
\begin{align}
K \leq \Cat_{\ell(S)-c_+-1} \Cat_{\ell(S)-c_--1} \leq \frac{4^{2\ell(S)-c_+-c_--2}}{(\ell(S)-c_+)(\ell(S)-c_-)} \leq\frac{4^{2\ell(S)-c_+-c_--2}}{\ell(S)},
\end{align}
since $c_++c_- \leq \ell(S)-1$ (to see this, observe that a vertex cannot
belong to both unbounded faces of $S$, and that at least two vertices belong
to $C$).  
This yields \eqref{eq:K} directly.
\end{proof}

It is clear that $\mu_n \rightarrow \infty$.
Furthermore, we have just proved that $1+ \mu_n s_n$ is a positive constant.
Thus $\gs_n=\Theta\xpar{\sqrt{\mu_n}}$, and hence
$\sigma_n=o(\mu_n)$ and $\mu_n = o(\sigma_n^3)$.
We can therefore apply Theorem \ref{thm:gw} to obtain the central limit theorem
in this case too:

\begin{theorem}\label{Tstrong}
  Let $S$ be a strong shape.
Then
\begin{align}\label{tstrong1}
\frac{X_{S,n} - n \mu_S}{\sigma_S\sqrt{n}} \underset{n \longrightarrow \infty}{\overset{(d)}{\rightarrow}} \cN(0,1),
\end{align}
where
\begin{align}\label{tstrong2}
\mu_S = \frac{2K}{4^{2\ell(S)-c_+-c_-}} \quad\text{and}\quad 
\sigma_S = \sqrt{\frac{2K}{4^{2\ell(S)-c_+-c_-}}
  \Bigpar{1- \frac{K(4\ell(S)-1)}{4^{2\ell(S)-c_+-c_-}} }}
.\end{align}
\end{theorem}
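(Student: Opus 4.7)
The plan is to deduce \eqref{tstrong1} by applying \refT{thm:gw} (Gao--Wormald) to the factorial-moment asymptotic \eqref{tor}, which has already been derived from the exact formula \eqref{eq:equality} of \refP{prop:constants} via \refLs{Lfact} and \ref{Lcat}. First I would read off, by comparing \eqref{tor} with the target form \eqref{GW}, the parameters $\mu_n$ and $s_n$ given in \eqref{mun}--\eqref{sn}; a short calculation then shows that $\mu_n = n\mu_S$ and $\mu_n(1+\mu_n s_n) = n\gss_S$, so that the scaling in the conclusion of \refT{thm:gw} matches that in \eqref{tstrong1}--\eqref{tstrong2}.

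The next step is to verify the three analytic hypotheses of \refT{thm:gw}. A key observation is that $\mu_n s_n$ is actually $n$-independent, equal to $-K(4\ell(S)-1)/4^{2\ell(S)-c_+-c_-}$. The content of \refL{LK} is exactly that this constant lies in $(-1,0)$, so $1+\mu_n s_n$ is a positive constant and hence $\gs_n = \Theta\xpar{\sqrt{\mu_n}} = \Theta\xpar{\sqrt n}$. The conditions $\mu_n\to\infty$, $\gs_n = o(\mu_n)$, and $\mu_n = o(\gs_n^3)$ all follow immediately. Moreover, $\mu_n/\gs_n = \Theta\xpar{\sqrt n}$, so the range $c\mu_n/\gs_n \le r \le C\mu_n/\gs_n$ appearing in \refT{thm:gw} is included in $r = O\xpar{\sqrt n}$, where \eqref{tor} holds uniformly by the conventions recalled in \refSS{SSnot}.

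With these inputs in place, \refT{thm:gw} directly yields $(X_{S,n}-\mu_n)/\gs_n \overset{(d)}{\to} \cN(0,1)$, which is \eqref{tstrong1} after substituting $\mu_n = n\mu_S$ and $\gs_n = \gs_S\sqrt n$. The only step that is not a mechanical check against the Gao--Wormald framework is the combinatorial inequality \eqref{eq:K}: it is what guarantees strict positivity of the limiting variance $\gss_S$ and hence ensures that the normalization in \eqref{tstrong1} makes sense. Once that is granted, no further analytic or probabilistic argument is needed for the strong-shape case, and the main obstacle--- namely the case where two components of shape $S$ may overlap, invalidating the clean product structure leading to \eqref{eq:equality}---is what remains to be handled separately in \refSS{SSweak}.
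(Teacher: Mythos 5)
Your proposal is correct and follows essentially the same route as the paper: it applies \refT{thm:gw} to the uniform asymptotic \eqref{tor} obtained from \refP{prop:constants}, reads off $\mu_n$ and $s_n$ as in \eqref{mun}--\eqref{sn}, and uses \refL{LK} to check $\mu_ns_n>-1$ so that $\gs_n=\Theta(\sqrt{\mu_n})$ and the remaining hypotheses follow. Nothing is missing.
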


This proves \refT{thm:main} in the case when $S$ is a strong shape, with
explicit 
formulas for $\mu_S$ and $\gs_S$.

\subsection{Weak shapes}\label{SSweak}

Finally, we study the case of a weak shape $S$.
Thus, now there may be overlaps between two components of shape $S$,
that is, two indices $i<j$ such that $|j-i|<2\ell(S)$ and $Y_i Y_j=1$,
where $Y_i$ is defined as before. See
Figure \ref{fig:overlapping} for an example. 

\begin{figure}
\begin{tikzpicture}
\draw[->] (0,0) -- (17,0);
\foreach \x in {1,...,16} \draw (\x,-.1) -- (\x,.1); 
\draw (6,0) arc (0:180:2.5);
\draw (5,0) arc(0:180:1.5);
\draw (1,0) arc (180:360:.5);
\draw (12,0) arc (0:180:2.5);
\draw (11,0) arc(0:180:1.5);
\draw (7,0) arc (180:360:.5);
\draw (6,0) arc (0:180:2.5);
\draw (5,0) arc(180:360:2.5);
\draw (10,0) arc(0:180:.5);
\draw (6,0) arc(180:360:1.5);
\draw (11,0) arc(180:360:2.5);
\draw (12,0) arc(180:360:1.5);
\draw (16,0) arc(0:180:.5);
\end{tikzpicture}
\caption{Two components of same shape overlapping. Here, $\E[Y_1 Y_7]>0$, while $2\ell(S)=10$.}
\label{fig:overlapping}
\end{figure}
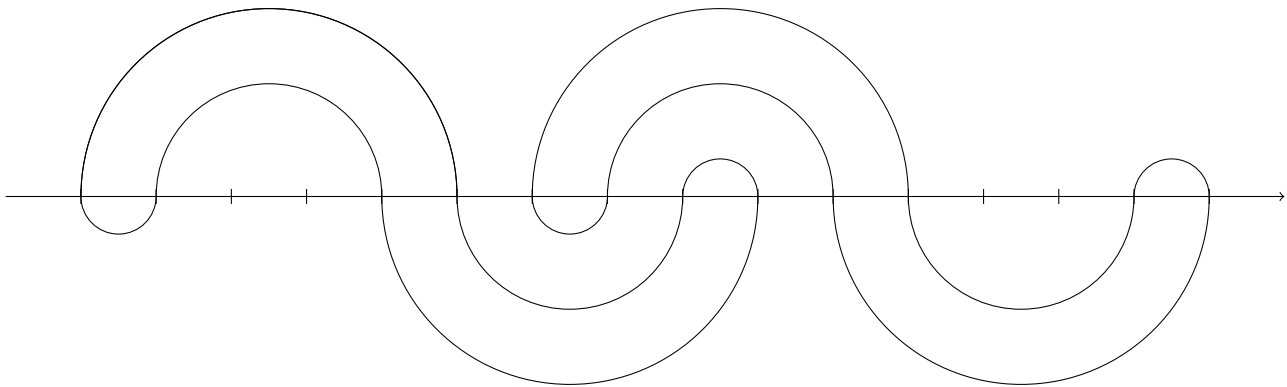

Let $A^r$ be the set of all $r$-tuples 
$ E := \{ i_1, \ldots, i_r \}$ with $1 \leq i_1< \cdots < i_r \leq 2n$, 
For any such $r$-tuple $E$,
define an equivalence relation $\sim_E$ on $\{ 1,\ldots, r \}$ as the
smallest one (for the inclusion of the equivalence 
classes) satisfying: for all $1 \leq k_1, k_2 \leq r$ such that $|i_{k_1} -
i_{k_2}| < 2\ell(S)$, $k_1 \sim_E k_2$. 
We call the equivalence classes of $\sim_E$ \emph{blocks}.
Furthermore, for $1 \leq j \leq r$, we
let $A^r_j$ be the set of $r$-tuples $E\in A^r$ 
that have exactly $j$ blocks. Thus $A^r=\bigcup_{j=1}^r A^r_j$.
Note that  $A^r_r$ is the set of $r$-tuples $E$ such that
all blocks are singletons.
An $r$-tuple $E$ corresponds to a collection
$(C_k)_1^r$ of loops of shape $S$, shifted such that $C_k$ has $L_{C_k}=i_k$.
In particular, $E\in A_r^r$ if and only if these loops are non-overlapping.

Define, for all $1 \leq u \leq r$:
\begin{align}\label{Fu}
F_u := \binom{2n-2u\ell(S)+u}{u} \, K^u \, \frac{\Cat_{n-u\ell(S) + u c_+} \Cat_{n- u \ell(S) + u c_-}}{\Cat_n^2}.
\end{align}
By the argument in the proof of \refP{prop:constants},
$u!\,F_u$ is the contribution to 
$\E[(X_{S,n})_u]$ from  $u$-tuples of non-overlapping components.

We have the  following estimates:

\begin{lemma}
\label{lem:bounds}
Let $S$ be a weak shape.
\begin{itemize}
\item[(i)] 
For all $r\ge1$,
\begin{equation}\label{lb1}
\E[(X_{S,n})_r] \geq r! F_r.
\end{equation}
\item[(ii)] For all $1 \leq u \leq r$, 
\begin{align}\label{lb2}
\sum_{E \in A^r_u} \E\left[\prod_{i \in E}Y_i\right] 
&\leq  
\binom{r-1}{u-1} (2\ell(S))^{r-u} F_u.
\end{align}

\item[(iii)]
For each fixed $M \geq 0$, uniformly for $r=O(\sqrt{n})$ with $r\ge 2M$,
\begin{align}\label{lb3}
\sum_{E \in A^r_{r-M}} \E\left[\prod_{i \in E}Y_i\right] = \Theta\left(r^M  F_{r-M}\right)
\end{align}
and, if also $r\to\infty$,
\begin{align}\label{lb12}
\sum_{E \in A^r_{r-M}(1,2)} \E\left[ \prod_{i \in E} Y_i \right] 
= \xpar{1-o(1)}\sum_{E \in A^r_{r-M}} \E\left[ \prod_{i \in E} Y_i \right] ,
\end{align}
\nobreak
where $A^r_{r-M}(1,2)$ is the subset of $A^r_{r-M}$ made only of blocks of sizes $1$ or $2$.

\end{itemize}
\end{lemma}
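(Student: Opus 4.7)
The plan is to start from the factorial-moment identity
\[
\E[(X_{S,n})_r]=r!\sum_{E\in A^r}\E\Bigl[\prod_{i\in E}Y_i\Bigr],
\]
classify tuples $E$ by their block structure, and compare each contribution to the ``non-overlapping baseline'' computed in the proof of \refP{prop:constants}.

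For \eqref{lb1}, I drop every term outside $A^r_r$. On $A^r_r$ the $r$ associated loops are pairwise non-overlapping, so the bijection in the proof of \refP{prop:constants} applies verbatim: each expectation equals $K^r\Cat_{n-r\ell(S)+rc_+}\Cat_{n-r\ell(S)+rc_-}/\Cat_n^2$, and by \refL{lem:binomial} there are $\binom{2n-2r\ell(S)+r}{r}$ such $E$. The surviving contribution is exactly $r!F_r$, and all dropped terms are nonnegative. For \eqref{lb2}, let $E\in A^r_u$ have block-leftmost indices $a_1<\cdots<a_u$. Since the $Y_i$ are indicators, $\prod_{i\in E}Y_i\le\prod_{j=1}^u Y_{a_j}$; moreover the defining property of blocks forces $(a_1,\ldots,a_u)$ to be non-overlapping, so the same bijection of \refP{prop:constants} gives $\sum_{(a_1,\ldots,a_u)\text{ non-overlapping}}\E[\prod_j Y_{a_j}]=F_u$. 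I then count the $E\in A^r_u$ producing a given $(a_j)$: the block-size vector $(r_1,\ldots,r_u)$ is a composition of $r$ into $u$ positive parts ($\binom{r-1}{u-1}$ choices), and inside a block of size $r_j$ the successive gaps all lie in $\{1,\ldots,2\ell(S)-1\}$, giving at most $(2\ell(S)-1)^{r_j-1}$ choices. Multiplying and bounding $(2\ell(S)-1)^{r-u}\le(2\ell(S))^{r-u}$ yields \eqref{lb2}.

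For \eqref{lb3}, the upper bound $O(r^M F_{r-M})$ is \eqref{lb2} at $u=r-M$, since $\binom{r-1}{M}=\Theta(r^M)$ uniformly in $r\ge 2M$ for fixed $M$. The matching lower bound is provided by $A^r_{r-M}(1,2)$: configurations with $M$ blocks of size $2$ and $r-2M$ singletons. Because $S$ is weak, some offset $d_0\in\{1,\ldots,2\ell(S)-1\}$ admits two copies of $S$ coexisting inside a meandric system, so any size-$2$ local patch at offset $d_0$ has a positive enumerative constant $\kappa_2>0$ and consumes definite numbers $v_\pm^{(2)}$ of upper/lower vertices. Adapting the bijection of \refP{prop:constants} (removing each size-$2$ patch instead of a single loop) together with \refL{Lcat} shows that each such $E$ has $\E[\prod_{i\in E}Y_i]$ bounded below by a positive constant multiple of the common value for a non-overlapping $(r-M)$-tuple of singletons. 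Multiplying by the $\binom{r-M}{M}$ positions of the size-$2$ blocks yields $\ge c\binom{r-M}{M}F_{r-M}=\Theta(r^MF_{r-M})$, matching the upper bound and giving \eqref{lb3}. For \eqref{lb12}, any $E\in A^r_{r-M}$ containing a block of size $\ge 3$ uses ``excess'' at least $2$ in that block, leaving at most $M-2$ excess for the other blocks; the counting argument behind \eqref{lb2} then degrades the composition count to $O(r^{M-1})$, so their joint contribution is $O(r^{M-1}F_{r-M})=o(r^MF_{r-M})$, which shows the $(1,2)$-subclass is asymptotically dominant.

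The delicate step will be the lower bound in \eqref{lb3}: one must extend the ``remove a loop'' bijection of \refP{prop:constants} to overlapping size-$2$ patches, identify the correct analogues of $K$, $c_+$, $c_-$ for such a patch, and verify that the resulting expectation stays bounded uniformly away from zero as $r,n\to\infty$. This requires a careful geometric analysis of how two overlapping copies of $S$ at a compatible offset jointly carve the plane into bounded faces and determine the new upper and lower vertex counts, after which \refL{Lcat} converts the Catalan ratios into an explicit positive constant.
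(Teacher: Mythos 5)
Your proposal is correct and follows essentially the same route as the paper: part (i) by discarding all terms outside $A^r_r$, part (ii) by reducing each block to its leftmost point (which lands in $A^u_u$) and counting compositions and internal offsets, and part (iii) by showing that configurations with $M$ overlapping pairs at a weak offset contribute $\Theta(r^M F_{r-M})$ while any block of size $\ge 3$ forces the non-singleton data to have length at most $M-1$ and hence contributes only $O(r^{M-1}F_{r-M})$. The ``delicate step'' you flag --- computing the per-configuration expectation for an overlapping patch via the removal bijection, its local Catalan factor and its upper/lower vertex counts, then comparing with $F_{r-M}$ through Lemma~\ref{Lcat} --- is precisely what the paper carries out in \eqref{fb1}--\eqref{eq:ctj2}, so nothing essential is missing.
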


\begin{proof}[Proof of Lemma \ref{lem:bounds}]
(i):
We rewrite \eqref{ab1} as
\begin{align}\label{fa}
\E\left[ (X_{S,n})_r \right] 
=r! \sum_{E \in A^r} \E\left[ \prod_{i \in E} Y_i \right]
= r!\sum_{u=1}^r\sum_{E \in A^r_u} \E\left[ \prod_{i \in E} Y_i \right].
\end{align}
The term with $u=r$ yields the contribution from
$r$-tuples of non-overlapping components, which as
noted after \eqref{Fu} is $r! F_r$.

(ii):
For each $r$-tuple $E\in A^r_u$, 
keep in the product only the
leftmost point of each block,
observing that, for any sets $A \subseteq B \subseteq \llbracket 1, 2n
\rrbracket$, 
we have 
$\E\left[ \prod_{i \in B} Y_i \right] \leq \E\left[ \prod_{i  \in A} Y_i\right]$.
Note that this set of leftmost points belongs to $A^u_u$.
If the size of the $i$-th leftmost block is $j_i$, then
for each set of leftmost points, 
the number of possible positions of the other $j_i-1$ points in the block
is at most $(2\ell(S))^{j_i-1}$,
since each point after the first is within $2\ell(S)$ of the
preceding one.
Hence, 
 \begin{align}\label{lb2b}
\sum_{E \in A^r_u} \E\left[\prod_{i \in E}Y_i\right] 
&\leq  
\, \sum_{\substack{j_1+\ldots+j_u=r \\ j_1, \ldots, j_u \geq 1}} 
{\prod_{i=1}^u (2\ell(S))^{j_i-1} }\cdot 
\sum_{E' \in A^u_u} \E\left[\prod_{i \in E'}Y_i\right] 
=
\, \sum_{\substack{j_1+\ldots+j_u=r \\ j_1, \ldots, j_u \geq 1}} 
{\prod_{i=1}^u (2\ell(S))^{j_i-1} }\cdot 
F_u 
.\end{align}
Finally, 
this yields \eqref{lb2}, since
the number of allowed sequences $(j_1,\dots,j_u)$ is
$\binom{r-1}{u-1}$, and
$\prod_{i=1}^u (2\ell(S))^{j_i-1} =(2\ell(S))^{r-u}$ for all of them.

(iii): 
We partition the set $A^r_{r-M}$ as follows.
Consider an
$(r-M)$-tuple $T := (T_1, \ldots, T_{r-M})$ of integers $\geq 1$, of sum
$r$, and consider also a function $J$ which, to each $1 \leq i \leq r-M$,
associates a $T_i$-tuple $J_i$ of integers $1 =: j_{i,1} < j_{i,2} < \ldots
< j_{i,T_i}$ such that, for all $1 \leq k \leq T_i-1$, $j_{i,k+1}-j_{i,k} <
2\ell(S)$, and, furthermore, the $T_i$ loops of shape $S$ that start at the
vertices $j_k$ ($k=1,\dots,T_i$) are disjoint so that they may occur
together as components in a meandric system. 
(We call such pairs $(T,J)$ \emph{admissible}.)
Denote by $A_{T,J}$ the subset of $A^r_{r-M}$ made of
$r$-tuples $E$ such that the $i$-th leftmost block of $E$ has size $T_i$,
and if this block is $\set{a^1_i, \ldots, a^{T_i}_i}$, 
then we have $a_i^{k+1}-a_i^k = j_{i,k+1}-j_{i,k}$ for all $1 \leq k \leq T_i-1$. 
In other words, $A_{T,J}$
accounts for all $r$-tuples of components
with $r-M$ blocks, where the sizes of the blocks
are given, as well as the intervals between the starting points of each
component of shape $S$ in each block. 
Hence, $A^r_{r-M}$ is the union $\bigcup A_{T,J}$ over all admissible pairs
$(T,J)$. 

Since we only consider $(r-M)$-tuples $T$ such that
\begin{align}\label{rm}
  r=\sum_{i=1}^{r-M} T_i
= r-M+\sum_{i=1}^{r-M} (T_i-1),
\end{align}
there at most $M$ indices $i$ with $T_i>1$, and thus at least $r-2M$ indices
with $T_i=1$. Note also that if $T_i=1$, then trivially $J_i=(1)$.
Given an admissible pair $(T,J)$, we define the reduced pair
$(\hT,\hJ)$ by deleting all $T_i$ and $J_i$ such that $T_i=1$ from $T$ and $J$;
thus $\hT:=(T_i:1\le i\le r-M \text{ and }T_i>1)$ and similarly for $\hJ$.
Consequently, $\hT$ and $\hJ$ are both sequences of (the same) length $\le M$.
Since \eqref{rm} implies that their entries are bounded (for a fixed $M$),
there is only a finite
set $\cT$ of reduced pairs $(\hT,\hJ)$, where $\cT$ depends on $M$ and $S$
but not on $r$.

Conversely, given an admissible reduced pair $(\hT,\hJ)$, with
$\hT=(\hT_1,\dots,\hT_k)$, we can obtain $(\hT,\hJ)$ from $\binom{r-M}{k}$
different (admissible) pairs $(T,J)$.
Note that here, by \eqref{rm}, since each $\hT_i\ge2$,
\begin{align}
  \label{rm1}
k \le \sum_{i=1}^k(\hT_i-1)
= \sum_{i=1}^{r-M}(T_i-1)=M,
\end{align}
with equality if and only if $\hT_i=2$ for all $i\le k$.

We now want to understand the behaviour of 
$\sum_{E \in A_{T,J}} \E \left[ \prod_{i \in E} Y_i \right]$ 
for an admissible pair $(T,J)$. 
In a way similar to Proposition \ref{prop:constants}
(using an extension of \refL{lem:binomial} to intervals of different lengths), 
we obtain
\begin{align}\label{fb1}
\sum_{E \in A_{T,J}} \E \left[ \prod_{i \in E} Y_i \right]
=
\binom{2n-2\tell+(r-M)}{r-M}\, \tK\, \frac{\Cat_{n-\tc_+} \Cat_{n-\tc_-}}{\Cat_n^2},
\end{align}
where, for any  $E\in A_{T,J}$, $\tell$ 
is the sum of the half-lengths of the blocks,
$\tK$ accounts for the bounded faces defined by 
the horizontal axis and the loops defined by $E$,
and
$\tc_+, \tc_-$ for the unbounded faces.
(Note that these constants are the same for all $E\in A_{T,J}$
so they  depend only on $T$ and $J$.)
Moreover,
since at least $r-2M$ of these blocks are singletons, 
and the remaining blocks are determined by $\hT$ and $\hJ$,
we can write
\begin{align}\label{fbK}
\tK = K^{r-2M} K',   
\end{align}
for some $K'>0$ depending only on $(\hT,\hJ)$. 
Similarly,
\begin{align}\label{fbl}
  \tell &= (r-M) \ell(S) + \ell', \\
\tc_+&=(r-M)(\ell(S) - c_+) + e_+,\label{fbc+}\\ 
\tc_-& = (r-M)(\ell(S)-c_-)+e_- \label{fbc-}
\end{align}
for some $\ell', e_+,e_-$ depending only on $(\hT,\hJ)$.
In particular, for a fixed $M$, it follows that
$K', \ell', e_+, e_-$ can only take a fixed number of values independently of
$n$ and $r$. 

We compare \eqref{fb1} and $F_{r-M}$ given by \eqref{Fu}. 
First, by \refL{Lfact}(iii),
\begin{align}
& \frac{\binom{2n-2\tell + (r-M)}{r-M}}{\binom{2n-2(r-M) \ell(S) + (r-M)}{r-M}}
=
 \frac{\bigpar{2n-2\tell + (r-M)}_{r-M}}{\bigpar{2n-2(r-M) \ell(S) + (r-M)}_{r-M}}
\\\notag&\hskip4em
\sim \exp\Bigpar{-\frac{r-M}{4n}
\bigpar{(4\tell-r+M)-(4(r-M)\ell(S)-r+M)}}
=\exp\bigpar{o(1)},
\end{align}
since $\tell=r\ell(S)+O(1)$ by \eqref{fbl}
and $r=o(n)$.
Similarly, as a consequence of \refL{Lcat} and \eqref{fbc+}--\eqref{fbc-},
\begin{align}
&\frac{\Cat_{n-\tc_\pm}}{\Cat_{n-(r-M)(\ell(S)-c_\pm)}} \sim
4^{-\tc_\pm+(r-M)(\ell(S)-c_\pm)}=4^{-e_\pm}.
\end{align}
Consequently, using also \eqref{fbK}, 
we obtain from \eqref{fb1} and \eqref{Fu},
\begin{align}
\label{eq:ctj}
\frac{\sum_{E \in A_{T,J}} \E \left[ \prod_{i \in E} Y_i \right]}{F_{r-M}} =
  C_{T,J} (1+o(1)), 
\end{align}
where $C_{T,J}>0$ only depends on $(\hT,\hJ)$, 
and therefore only takes a finite number of values.
In particular,
\begin{align}\label{eq:ctj2}
\sum_{E \in A_{T,J}} \E \left[ \prod_{i \in E} Y_i \right]=\Theta\bigpar{F_{r-M}},
\end{align}
and this holds uniformly for $r=O(\sqrt n)$ and all
admissible
$(T,J)$. 

By \eqref{rm1} and the discussion before it, there are
$\binom{r-M}{k}=\Theta\xpar{r^k}$ admissible pairs $(T,J)$ for each
$(\hT,\hJ)$, where $k\le M$, with equality when all $\hT_i=2$.
Note that since we assume that the shape $S$ is weak, 
there exists at least one such admissible $(\hT,\hJ)$ with $\hT=(2,\dots,2)$.
Hence, summing \eqref{eq:ctj2} over all $(T,J)$ yields \eqref{lb3}.

Moreover, $A^r_{r-M}\setminus A^r_{r-M}(1,2)$ is the union $\bigcup'
A_{T,J}$ where we only sum over admissible pairs $(T,J)$ with some
$T_i\ge3$; these correspond to reduced pairs $(\hT,\hJ)$ with some
$\hT_i\ge3$, and we see from \eqref{rm1} that each such reduced pair
has length $\le M-1$, and thus corresponds to $O(r^{M-1})$ admissible pairs.
Consequently,
summing \eqref{eq:ctj2} over all $(T,J)$ of this type yields
\begin{align}
  \sum_{E \in A^r_{r-M}\setminus A^r_{r-M}(1,2)} \E\left[ \prod_{i \in E} Y_i \right] 
= O\bigpar{r^{M-1}F_{r-M}}
= o\bigpar{r^{M}F_{r-M}},
\end{align}
which yields \eqref{lb12}  by \eqref{lb3}.
\end{proof}

The next proposition shows that, in order to get the asymptotic behaviour
of $\E[(X_{S,n})_r]$, we only need to take into account the configurations
whose number of blocks that are not singletons is a given constant. 

\begin{proposition}
\label{prop:tightness}
Fix a weak shape $S$.  Then, there exists $\eta>0$ such that, 
for any $\epsilon >0$, there exists $M>0$ such that we have,
uniformly for $r\le\eta \sqrt{n}$,
\begin{align}\label{pt}
\sum_{u \leq r-M} \sum_{E \in A^r_u} \E\left[\prod_{i \in E} Y_{i} \right] 
\le \epsilon F_r
\leq\epsilon\frac{1}{r!} \E[(X_{S,n})_r].
\end{align}
\end{proposition}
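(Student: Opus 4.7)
The second inequality in \eqref{pt} is immediate from \refL{lem:bounds}(i), so the task is to establish the first: a bound by $\epsilon F_r$ on the sum over $u \le r-M$.

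My starting point will be \refL{lem:bounds}(ii). After the change of variables $v = r-u$, it gives
\begin{align}
\sum_{u \le r-M} \sum_{E \in A^r_u} \E\left[\prod_{i \in E} Y_i\right] \le \sum_{v \ge M} \binom{r-1}{v}(2\ell(S))^{v} F_{r-v},
\end{align}
so it suffices to bound this sum by $\epsilon F_r$ uniformly in the relevant range.

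The key ingredient is the ratio $F_{r-v}/F_r$. Since the formula \eqref{Fu} defining $F_u$ is exactly the one that appeared in the strong case, the asymptotic \eqref{tor} can be read as $F_u \sim \mu_n^u e^{u^2 s_n/2}/u!$ uniformly for $u = O(\sqrt n)$, with $\mu_n$ and $s_n$ as in \eqref{mun}--\eqref{sn}. This will yield
\begin{align}
\frac{F_{r-v}}{F_r} = \Theta(1) \cdot \frac{r!}{(r-v)!}\,\mu_n^{-v}
\end{align}
uniformly for $r \le \eta\sqrt n$ and $0 \le v \le r$, with an implicit constant depending only on $\eta$: indeed the exponential correction $e^{((r-v)^2 - r^2)s_n/2}$ is $\Theta(1)$ because $|s_n|=\Theta(1/n)$ and $|v^2-2rv|\le 3r^2 = O(\eta^2 n)$. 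Combined with the elementary bound $\binom{r-1}{v}(r)_v \le r^{2v}/v!$, I obtain
\begin{align}
\binom{r-1}{v}(2\ell(S))^{v} F_{r-v} \le C_\eta\, F_r\, \frac{(2\ell(S)\, r^2/\mu_n)^v}{v!}.
\end{align}

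Since $\mu_n = \Theta(n)$, the quantity $2\ell(S)\, r^2/\mu_n$ is bounded by a constant $D_\eta$ depending only on $\eta$ and $S$ whenever $r \le \eta\sqrt n$. Summing over $v \ge M$ then gives a bound of the form $C_\eta\, F_r \sum_{v \ge M} D_\eta^v/v!$, and for any fixed $\eta$ the tail of this convergent exponential series tends to $0$ as $M \to \infty$. Hence $M$ can be chosen (in terms of $\epsilon$, $\eta$, and $S$) so that the bound is at most $\epsilon F_r$, as required. The plan therefore works for any $\eta>0$; the only point needing care is the uniform control of the exponential correction in the asymptotic for $F_u$ across all $u \le r$ simultaneously, which is automatic in our range and poses no real obstacle.
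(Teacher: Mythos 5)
Your proof is correct, but it proceeds by a different mechanism than the paper's. The paper also starts from Lemma \ref{lem:bounds}(ii), but then sets $B_{r,u} := \binom{r-1}{u-1}(2\ell(S))^{r-u}F_u$ and proves via a separate one-sided ratio estimate (Lemma \ref{lem:compare}, $F_{u+1}/F_u \ge Qn/u$ by direct term-by-term computation from \eqref{Fu}) that $B_{r,u+1}\ge 2B_{r,u}$ once $\eta$ is small enough, so the sum is dominated by a geometric series and is at most $2^{1-M}F_r$. You instead invoke the full two-sided uniform asymptotic $F_u\sim \mu_n^u e^{u^2s_n/2}/u!$ (which, as you correctly note, follows from the computation behind \eqref{tor} applied to the formula \eqref{Fu} irrespective of whether $S$ is strong, and is in effect \eqref{mam}) and bound the tail by a convergent exponential series $\sum_{v\ge M}D_\eta^v/v!$. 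Your route requires slightly more input (the uniform asymptotics for $F_u$ over all $u\le r$ simultaneously, though this is available from Lemmas \ref{Lfact} and \ref{Lcat} with the uniformity conventions of Section \ref{SSnot}), whereas the paper's needs only a lower bound on consecutive ratios; in exchange, your argument works for \emph{any} $\eta>0$ rather than only for $\eta$ small, which is exactly the strengthening the paper mentions in the remark following the proposition but declines to prove. All the individual estimates you use ($\binom{r-1}{v}(r)_v\le r^{2v}/v!$, the $\Theta(1)$ control of $e^{(v^2-2rv)s_n/2}$ via $|s_n|=\Theta(1/n)$ and $|v^2-2rv|\le 3r^2$, and $\mu_n=\Theta(n)$) check out, so there is no gap.
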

\begin{remark}
  For convenience, we assume here that $r/\sqrt n$ is small.
In fact, \refP{prop:tightness} can easily be extended to
  $r\le C\sqrt n$ for any $C$ (with $M$ depending on $C$ and $\epsilon$),
but we have no need for this.
\end{remark}

To prove this, we start with a lemma:

\begin{lemma}
\label{lem:compare}
There exists $Q > 0$ depending only on the shape $S$ such that, for $n$
large enough, for all $u \leq  \sqrt{n}$:
\begin{align}\label{lc1}
\frac{F_{u+1}}{F_u} \geq Q \frac{n}{u}.
\end{align}
\end{lemma}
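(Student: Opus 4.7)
The plan is to expand $F_{u+1}/F_u$ using \eqref{Fu}, which gives a product of a ratio of binomials, the constant $K$, and two ratios of Catalan numbers; each piece can be estimated uniformly for $u \leq \sqrt{n}$ using \refLs{Lfact} and \ref{Lcat}. Writing $\ell := \ell(S)$, we have schematically
\begin{equation*}
\frac{F_{u+1}}{F_u} = K \cdot \frac{\binom{2n-2(u+1)\ell+(u+1)}{u+1}}{\binom{2n-2u\ell+u}{u}} \cdot \prod_{\varepsilon \in \{+,-\}} \frac{\Cat_{n-(u+1)\ell+(u+1)c_\varepsilon}}{\Cat_{n-u\ell+uc_\varepsilon}}.
\end{equation*}

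The binomial piece is the main quantitative input. Both top parameters are $2n + O(\sqrt n)$ while the bottom parameters are $u$ and $u+1$, so after rewriting the binomials as $(N)_u/u!$ and simplifying, a short computation (either directly, or via \refL{Lfact}) shows that the ratio is $\sim 2n/(u+1)$ uniformly for $u = O(\sqrt n)$; in particular, it is bounded below by $c_1 n/(u+1)$ for some $c_1 > 0$ and $n$ large. For each Catalan piece, set $m := n - u\ell + uc_\varepsilon$, which satisfies $m = n(1+o(1))$ uniformly for $u \leq \sqrt n$; the ratio is then $\Cat_{m-(\ell-c_\varepsilon)}/\Cat_m$, with the \emph{fixed} shift $\ell - c_\varepsilon \geq 1$ depending only on $S$. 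By \refL{Lcat} applied with this constant shift, the ratio converges to $4^{-(\ell-c_\varepsilon)} > 0$, hence is bounded below by a positive constant depending only on $S$ for $n$ large.

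Multiplying these lower bounds yields $F_{u+1}/F_u \geq Q' n/(u+1)$ for some $Q' > 0$ depending only on $S$, uniformly for $1 \leq u \leq \sqrt n$ and $n$ large. Since $u+1 \leq 2u$ for $u \geq 1$, this gives the claimed inequality with, say, $Q := Q'/2$. The only mildly delicate point is uniformity in $u$, but this is automatic because the shifts $\ell - c_\pm$ appearing in the Catalan arguments are $S$-dependent constants that do not grow with $u$, while $m$ and the top parameters of the binomials remain of order $n$ throughout the range $u \leq \sqrt n$; no more careful handling is required.
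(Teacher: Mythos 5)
Your proof is correct and follows essentially the same route as the paper's: factor the ratio $F_{u+1}/F_u$ term by term, note that the $K$-factor contributes a constant, that the Catalan ratios tend to positive constants depending only on $S$ (via \refL{Lcat} with a fixed shift), and that the binomial ratio is $\Theta(n/u)$ by \refL{Lfact}. Your version is in fact slightly more precise on the binomial piece (identifying the asymptotic constant $2n/(u+1)$ rather than just $\exp(O(1))$), but nothing essential differs.
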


\begin{proof}
We just compute the ratio term by term, recalling \eqref{Fu}. 
We have $\frac{K^{u+1}}{K^u} = K$. The ratio of the ratios of Catalan
numbers converges uniformly to a positive constant. Finally, the ratio of
binomial coefficients is, using \refL{Lfact}, 
\begin{align}\label{lc2}
\frac{u!}{(u+1)!}\cdot\frac{\bigpar{2n-(u+1)(2\ell(S)-1)}_{u+1}} 
{\bigpar{2n-u(2\ell(S)-1)}_u}
=\frac{1}{u+1}\cdot \frac{(2n)^{u+1}}{(2n)^u}\exp\bigpar{O(1)}
\ge c\frac{n}{u}
\end{align}
for some $c>0$ and all large $n$ and $u\le \sqrt n$.
The result follows. 
\end{proof}

\begin{proof}[Proof of Proposition \ref{prop:tightness}]
Using Lemma \ref{lem:bounds}(ii), we have for all $M\geq 0$:
\begin{align}\label{fc1}
\sum_{u \leq r-M} \sum_{E \in A^r_u} \E\left[\prod_{i \in E} Y_{i} \right] 
&\le  \sum_{u=1}^{r-M} \binom{r-1}{u-1} (2\ell(S))^{r-u} F_u.
\end{align}

Letting 
\begin{align}\label{fc2}
B_{r,u} :=  \binom{r-1}{u-1} (2\ell(S))^{r-u} F_u,
\end{align}
we get from Lemma \ref{lem:compare} that, for 
$r\le \sqrt n$ and
any $u \leq r-1$:
\begin{align}\label{fc3}
\frac{B_{r,u+1}}{B_{r,u}} = \frac{1}{2\ell(S)} \frac{r-u}{u} \frac{F_{u+1}}{F_u} 
\geq \frac{Q}{2\ell(S)} \frac{n(r-u)}{u^2}
\geq \frac{Q}{2\ell(S)} \frac{n}{u^2}
.\end{align}
Hence, there exists $\eta>0$ small enough such that, for all 
$u < r \leq \eta \sqrt{n}$, we have $B_{r,u+1} \geq 2 B_{r,u}$, and thus by
backward induction, 
\begin{align}\label{fc4}
  B_{r,u} \le 2^{-(r-u)}B_{r,r}
.\end{align}
Then, for $r\le\eta\sqrt n$, \eqref{fc1} yields
\begin{align}\label{fc5}
\sum_{u \leq r-M} \sum_{E \in A^r_u} \E\left[\prod_{i \in E} Y_{i} \right] 
&\le  \sum_{u=1}^{r-M} B_{r,u}
\le 2^{1-M} B_{r,r} =2^{1-M}F_r.
\end{align}
This yields \eqref{pt} if we choose $M$ such that $2^{1-M}\le\eps$,
since  $r!\,F_r \le \E[(X_{S,n})_r]$ by the comment after \eqref{Fu}.
\end{proof}

Proposition \ref{prop:tightness} shows that we only need to understand the
asymptotic behaviour of the configurations with a number of blocks $r-M$ for
given $M \geq 0$, and Lemma \ref{lem:bounds}(iii) that we can focus on
configurations with blocks of size $1$ or $2$. To actually prove our final
result, we need to refine Lemma \ref{lem:bounds}(iii) and obtain the
explicit constants that appear.
We define another set of constants, which will account for the cases with
blocks of size 2, i.e., cases when two components of shape $S$ overlap. 

\begin{definition}
\label{def:weakconstants}
Let $S$ be a shape. There is a finite set of integers  $i \geq 1$ such that
$\E[Y_1 Y_i] > 0$ and $i-1 < 2\ell(S)$. Let $I(S)$ be this set, and $i_1,
\ldots, i_k$ its elements. For $i \in I(S)$, let $\ell_i$, $K_i$, $c_+(i)$
and $c_-(i)$ be the equivalents of $\ell(S), K, c_+, c_-$ in this case of
two components $C, C'$ that overlap and start at positions $1$ and $i$. In
particular, $\ell_i = \ell(S) + (i-1)/2$ is the total half-length of the
block made of two components of shape $S$ started at positions $1$ and
$i$. 
Furthermore, $C$ and $C'$ together with the horizontal axis
define two unbounded faces ($F_+$ in the upper half-plane  and $F_-$ in the
lower half-plane), and several bounded faces; let $\cF(C,C')$ be the
set of bounded faces. 
For each face $F $, let $\vI(F)$ be
the number of integers in $\llrr{L(C),R(C)}\cup \llrr{L(C'),R(C')}
=\llrr{L(C),R(C')}$
that are incident to $F$ but do not belong to $C$ nor to
$C'$. We set $K_i := \prod_{F \in \cF(C, C')} \Cat_{\vI(F)/2}$. Finally,
we define $c_\pm(i):=\vI(F_\pm)/2$.
Observe
again that all these constants only depend on $S$ and $i$. 
\end{definition}
Note that $i\in I(S)$ may be even; in this case $2\ell_i$, $\vI(F_+)$ and
$\vI(F_-)$ are odd, and thus $\ell_i$ and $c_\pm(i)$ are half-integers.

\begin{lemma}\label{LM}
Let $r=O(\sqrt n)$ with $r\to\infty$.
Then,
for every fixed $M\ge0$,
\begin{align}\label{mar1}
\sum_{E \in A^r_{r-M}} \E\left[ \prod_{i \in E} Y_i  \right]
\underset{n \rightarrow \infty}{\sim} 
F_r \sum_{\substack{g_i\ge0, i\in I(S)\\ \sum_i g_i=M}}
\prod_{i \in I(S)} \frac{\bigpar{b_i\frac{r^2}{2n}}^{g_i}}{g_i!}
,\end{align}
where
\begin{align}\label{mab}
b_i&:=4^{4\ell(S)-2\ell_i+c_+(i)-2c_++c_-(i)-2c_-}\frac{K_i}{K^2}.
\end{align}
\end{lemma}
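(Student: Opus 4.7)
The plan is to build on the combinatorial framework set up in Lemma \ref{lem:bounds}(iii), refining the constants $C_{T,J}$ appearing in \eqref{eq:ctj} to extract the precise asymptotic constants $b_i$. The key observation is that, by \eqref{lb12}, we may restrict the sum to $A^r_{r-M}(1,2)$, whose elements consist of exactly $M$ blocks of size $2$ together with $r-2M$ singleton blocks. Each size-$2$ block is, by Definition~\ref{def:weakconstants}, characterised by a spacing $i \in I(S)$, so I will parametrise such configurations by a tuple $(g_i)_{i \in I(S)}$ with $\sum_i g_i = M$, where $g_i$ counts size-$2$ blocks of spacing $i$.

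First, I will compute $C_{T,J}$ explicitly when $\hT = (2,\dots,2)$ has length $M$ and $\hJ = (j_1,\dots,j_M) \in I(S)^M$. Going back to the derivation preceding \eqref{eq:ctj}, we have $C_{T,J} = (K'/K^M) \cdot 4^{-e_+-e_-}$, where $K' = \prod_k K_{j_k}$ and $e_\pm = \sum_k (\ell_{j_k} - \ell(S) + c_\pm - c_\pm(j_k))$ by the contributions of pairs of spacing $j_k$ relative to the singleton defaults in \eqref{fbK}--\eqref{fbc-}. A direct expansion then gives $C_{T,J} = \prod_k b_{j_k}'$, where
\begin{align}
b_i' := \frac{K_i}{K}\, 4^{2\ell(S) - 2\ell_i + c_+(i) - c_+ + c_-(i) - c_-}.
\end{align}
Comparing with \eqref{mab}, one checks $b_i = b_i' \cdot 4^{2\ell(S)-c_+-c_-}/K = b_i' \cdot 2n/\mu_n$, i.e. $b_i' = b_i \mu_n/(2n)$, where $\mu_n$ is as in \eqref{mun}.

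Next, I will sum over all admissible $(T,J)$ corresponding to the specification $(g_i)$. For each fixed unordered type $(g_i)$, there are $\binom{r-M}{M}$ ways to choose which of the $r-M$ block slots are doubled and $M!/\prod_i g_i!$ orderings of the spacings, giving
\begin{align}
\sum_{E \in A^r_{r-M}(1,2)} \E\Bigl[\prod_{i \in E} Y_i\Bigr]
\sim \binom{r-M}{M} F_{r-M} \sum_{\substack{g_i \geq 0 \\ \sum_i g_i = M}} \frac{M!}{\prod_i g_i!} \prod_{i \in I(S)} (b_i')^{g_i}.
\end{align}
Using $\binom{r-M}{M} M! = (r-M)_M \sim r^M$ (for fixed $M$ and $r \to \infty$), substituting $b_i' = b_i \mu_n/(2n)$ and factoring the common $(\mu_n/(2n))^{\sum g_i} = (\mu_n/(2n))^M$, the right-hand side becomes $r^M F_{r-M} (\mu_n/(2n))^M \sum_g \prod_i b_i^{g_i}/g_i!$.

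Finally, I will apply the asymptotic $F_r \sim F_{r-M}\, \mu_n^M / r^M$, which follows directly from the closed form \eqref{Fu} via Lemmas~\ref{Lfact} and~\ref{Lcat} (one obtains $F_u \sim \mu_n^u e^{u^2 s_n/2}/u!$ with $\mu_n, s_n$ as in \eqref{mun}--\eqref{sn}, and the correction $e^{(r^2-(r-M)^2)s_n/2} \to 1$ since $r = O(\sqrt n)$). Combining yields $r^M F_{r-M} (\mu_n/(2n))^M = F_r (r^2/(2n))^M (1+o(1))$, so regrouping the $(r^2/(2n))^M$ inside the product (using $\sum g_i = M$) gives exactly the right-hand side of \eqref{mar1}. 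Invoking \eqref{lb12} to pass from $A^r_{r-M}(1,2)$ back to $A^r_{r-M}$ completes the proof. The main bookkeeping obstacle is verifying that the explicit expression for $C_{T,J}$ matches the stated $b_i$; the rest is standard manipulation using multinomial expansion and the asymptotics already established.
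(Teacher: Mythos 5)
Your proposal is correct and follows essentially the same route as the paper: the same partition of $A^r_{r-M}(1,2)$ by the multiplicities $(g_i)_{i\in I(S)}$, the same count $\binom{r-M}{M}\,M!/\prod_i g_i!$ of admissible pairs $(T,J)$, the same asymptotic inputs (Lemmas~\ref{Lfact} and~\ref{Lcat} applied to \eqref{fb1}), and the same final appeal to \eqref{lb12}. The only difference is organizational — you normalize by $F_{r-M}$ via the constants $C_{T,J}$ of \eqref{eq:ctj} and then convert using $F_r\sim F_{r-M}\,\mu_n^M/r^M$, whereas the paper compares directly to $F_r$ in \eqref{ma2}--\eqref{ma6} — and your identification $b_i=b_i'\cdot 4^{2\ell(S)-c_+-c_-}/K$ checks out against \eqref{mab}.
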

Note that $b_i$ measures (in a specific way) how much two overlapping
components of shape $S$ differ from two non-overlapping ones.
\begin{proof}
For each $I(S)$-tuple $G = (g_i)_{i \in I(S)}$
of integers with sum   $M$,
let $A^r_{r-M, G}$   be the set of $r$-tuples
$1 \leq i_1 < \ldots  < i_r \leq 2n$
  with $r-2M$ blocks of size $1$ and $M$ blocks of size $2$, 
such that for each $i\in I(S)$, there are $g_i$ blocks
of type  $\set{i_k,i_{k+1}= i_k+i-1}$ with $k<r$.
Then $A^r_{r-M,G}$ is the union of some classes $A_{T,J}$ from the proof of
\refL{lem:bounds}, with all $T_i\in\set{1,2}$ and a specified number $g_i$ of 
$k$ such that $J_k=(1,i)$. 
Hence, we obtain from \eqref{fb1}, where the multinomial coefficient is the
number of $(T,J)$ that are included in $A^r_{r-M,G}$,
\begin{align}\label{ma1}
\sum_{E \in A^r_{r-M, G}} \E\left[ \prod_{i \in E} Y_i \right] 
=  \binom{r-M}{g_{i_1}, \ldots, g_{i_k}, r-2M}  \binom{2n-2\bell+(r-M)}{r-M}\, 
\bK\, \frac{\Cat_{n-\bc_+} \Cat_{n-\bc_-}}{\Cat_n^2}, 
\end{align}
where, by \eqref{fbK}--\eqref{fbc-} and the argument yielding them:
\begin{align}\label{mak}
\bK &= K^{r-2M} \prod_{i \in I(S)} K_i^{g_i},
\\\label{mal}
\bell&= (r-2M)\ell(S) + \sum_{i \in I(S)} g_i\ell_i,
\\\label{mad}
\bc_\pm &= \bell - (r-2M) c_\pm - \sum_{i \in I(S)} g_i  c_\pm(i)
.\end{align}

We now argue similarly as in the proof of \refL{lem:bounds}, but this time
we compare to $F_r$. 
We have
\begin{align}\label{ma2}
\binom{r-M}{g_1, \ldots, g_k,r-2M} \sim r^M \prod_{i \in I(S)} \frac{1}{g_i!} 
,\end{align}
\begin{align}\label{ma3}
& \frac{\binom{2n-2\tell + (r-M)}{r-M}}{\binom{2n-2r \ell(S) + r}{r}}
=
\frac{r!}{(r-M)!}\cdot
 \frac{\bigpar{2n-2\tell + (r-M)}_{r-M}}{\bigpar{2n-2r \ell(S) + r}_{r}}
\\\notag&\hskip2em
\sim r^M(2n)^{-M}\exp\Bigpar{-\frac{1}{4n}
\bigpar{(r-M)(4\tell-r+M)-r(4r\ell(S)-r)}}
\sim r^M(2n)^{-M},
\end{align}
\begin{align}\label{ma4}
\frac{\bK}{K^r} = K^{-2M} \prod_{i \in I(S)} K_i^{g_i},
\end{align}
\begin{align}\label{ma5}
\frac{\Cat_{n-\bc_\pm}}{\Cat_{n-r \ell(S) + r c_\pm}} 
\sim 4^{-\bc_\pm+r(\ell(S)-c_\pm)}
= 4^{2M(\ell(S)-c_\pm) - \sum_{i \in I(S)} (\ell_i - c_\pm(i)) g_i}.
\end{align}
and thus, from \eqref{ma1} and \eqref{Fu}, 
recalling that $\sum_{i\in I(S)}g_i=M$, 
\begin{align}\label{ma6}
&\frac{\sum_{E \in A^r_{r-M, G}} \E\left[ \prod_{i \in E} Y_i  \right]}{F_r}
    \\\notag 
  &\qquad\underset{n \rightarrow \infty}{\sim} 
r^{2M} (2nK^2)^{-M}
4^{2(\ell(S)-c_+)M - \sum_{i \in I(S)} (\ell_i - c_+(i))g_i} 
4^{2(\ell(S)-c_-)M - \sum_{i \in I(S)} (\ell_i - c_-(i))g_i}
\prod_{i \in I(S)} \frac{1}{g_i!} K_i^{g_i} 
    \\\notag 
&\qquad = 
\left( B \frac{r^2}{2n} \right)^M \prod_{i \in I(S)} \frac{q_i^{g_i}}{g_i!}
=
\prod_{i \in I(S)} \frac{\bigpar{Bq_i\frac{r^2}{2n}}^{g_i}}{g_i!}
,\end{align}
where
\begin{align}\label{ma7}
  B&:=\frac{4^{4\ell(S)-2c_+-2c_-}}{K^2},
\\\label{ma8}
q_i&:=4^{-2\ell_i+c_+(i)+c_-(i)}K_i.
\end{align}

The set $A^r_{r-M}(1,2)$ defined in \refL{lem:bounds}(iii) is
the union of $A^{r}_{r-M,G}$ over all $G$ with sum $M$.
Hence, \eqref{ma6} implies, noting that there is only a finite number of
such $G$, 
\begin{align}\label{ma9}
\sum_{E \in A^r_{r-M}(1,2)} \E\left[ \prod_{i \in E} Y_i  \right]
\underset{n \rightarrow \infty}{\sim} 
F_r
\sum_{\substack{g_i\ge0, i\in I(S)\\ \sum_i g_i=M}}
\prod_{i \in I(S)} \frac{\bigpar{Bq_i\frac{r^2}{2n}}^{g_i}}{g_i!}
.\end{align}
The result \eqref{mar1} now follows from \eqref{ma9} and \eqref{lb12}, 
using $Bq_i=b_i$.
\end{proof}

\begin{proof}[Proof of Theorem \ref{thm:main} for weak shapes]
Let  $r\to\infty$ with $r\le \eta\sqrt n$, where $\eta$ is as in
\refP{prop:tightness}. 

We may sum \eqref{mar1} over all $M\ge0$ 
(with $A^r_{r-M}:=\emptyset$ for $M>r$), since \refP{prop:tightness} shows
that we may approximate the sum by a finite sum with a fixed number of
terms.
Consequently, recalling \eqref{fa},
\begin{align}\label{mar2}
\E\left[ (X_{S,n})_r \right] 
&=r!\sum_{M=0}^\infty
\sum_{E \in A^r_{r-M}} \E\left[ \prod_{i \in E} Y_i  \right]
{\sim} 
r!\,F_r \sum_{M=0}^\infty\sum_{\substack{g_i\ge0, i\in I(S)\\ \sum_i g_i=M}}
\prod_{i \in I(S)} \frac{\bigpar{b_i\frac{r^2}{2n}}^{g_i}}{g_i!}
\\\notag&
=r!\,F_r\prod_{i \in I(S)} \exp\Bigpar{b_i\frac{r^2}{2n}}
.\end{align}
By \refLs{Lfact} and \ref{Lcat}, \eqref{Fu} implies (similarly to \eqref{tor})
\begin{align}\label{mam}
  r!\,F_r\sim 
\left( \frac{2n K}{4^{2\ell(S)-c_+-c_-}} \right)^r 
\exp\Bigpar{-\frac{r^2}{4n}\bigpar{4\ell(S)-1}}.
\end{align}
Finally,  \eqref{mar2} and \eqref{mam} yield,
for $r\to\infty$ with $r\le\eta\sqrt n$,
\begin{align}\label{mar3}
\E\left[(X_{S,n})_r\right] \underset{n \rightarrow \infty}{\sim} \left( \frac{2n K}{4^{2\ell(S)-c_+-c_-}} \right)^r \exp \left( -\frac{r^2}{4n} \left( 4\ell(S)-1 \right) +   \frac{r^2}{2n}\sum_{i \in I(S)} b_i \right).
\end{align}
This is \eqref{GW}, 
with
\begin{align}\label{mun}
  \mu_n&:=\frac{2n K}{4^{2\ell(S)-c_+-c_-}},
\\\label{sn}
s_n&:
=\frac{-(4\ell(S)-1)+2\sum_{i\in I(S)}b_i}{2n}.
\end{align}

In particular, \eqref{GW} thus holds for $r=r(n)$ with
$\frac{\eta}2\sqrt n \le r \le\eta \sqrt n$;
as noted in \refSS{SSnot}, it then automatically  holds uniformly in this range.
Furthermore, 
\begin{align}
\mu_ns_n\ge -\frac{K (4\ell(S)-1) }{4^{2\ell(S)-c_+-c_-} }>-1
\end{align}
by \refL{LK}, and we have again $\mu_n=\Theta(n)$ and $\gs_n=\Theta(\sqrt n)$.
It follows that \refT{thm:gw} applies in this case too, which yields \eqref{tm}.
\end{proof}
We obtain from \eqref{mun}--\eqref{sn}
\begin{align}
  \gss_S=\frac{2 K}{4^{2\ell(S)-c_+-c_-}}
\Bigpar{1+\frac{ K}{4^{2\ell(S)-c_+-c_-}}\Bigpar{1-4\ell(S)+2\sum_{i\in I(S)}b_i} },
\end{align}
with $b_i$ given by \eqref{mab}.
Note that this formula holds also for strong shapes (when $I(S)=\emptyset$)
by \eqref{tstrong2}.

\section{Open problems}

\renewcommand{\theenumi}{\arabic{enumi}.}
\renewcommand{\labelenumi}{\theenumi}

We list here some open problems concerning possible extensions of our results.

\begin{enumerate}
\item 
It seems possible to extend the arguments above to joint factorial moments
\begin{align}
\E\bigsqpar{(X_{S_1,n})_{r_1}\dotsm(X_{S_k,n})_{r_k}} 
\end{align}
for several shapes
$S_1,\dots,S_k$, and then obtain a multivariate version of \refT{thm:main}
using a multivariate version of Gao and Wormald's theorem
\cite{SJfringetrees}, \cite{Wormald}.
However, we have not checked the details. Such a multivariate theorem would immediately imply, for example, a central limit theorem for the number of components of a given half-length.


\item 
Considering shapes that are similar, can we obtain a central limit theorem for the number of components that only cross the
horizontal axis twice (i.e., the support has size 2, but the half-length is arbitrary)?

\item Is is true, as Kargin \cite{Kargin} has conjectured, that the total number of components is asymptotically normal?
\end{enumerate}

%
%
%

\begin{thebibliography}{99}

\bibitem{DFGG}
P. Di Francesco, O. Golinelli and E. Guitter. 
Meanders and the Temperley-Lieb algebra. 
\emph{Commun. Math. Phys.} \textbf{186} (1997), 1–-59.

\bibitem{BGP}
Jacopo Borga, Ewain Gwynne and Minjae Park.
On the geometry of uniform meandric systems.
https://arxiv.org/abs/2212.00534.

\bibitem{SJfringetrees}
Gabriel Berzunza Ojeda, Cecilia Holmgren and Svante Janson.
Fringe trees for random trees with given vertex degrees.
In preparation.

\bibitem{FT22}
Valentin Féray and Paul Thévenin.
Components in meandric systems and the infinite noodle.
\emph{Int. Math. Res. Not. IMRN} (2022), rnac156.

\bibitem{GW04}
Jason Gao and Nicholas Wormald.
Asymptotic normality determined by high moments, and submap counts of random maps.
\emph{Probab. Theory Related Fields} \textbf{130} (2004), 368--376.

\bibitem{Kargin}
Vladislav Kargin.
Cycles in random meander systems.
\emph{J. Stat. Phys.} \textbf{181} (2020), no. 6, 2322--2345.

\bibitem{Stanley}
Richard P. Stanley. \emph{Catalan numbers}. 
Cambridge University Press, New York, 2015. 

\bibitem{Wormald} 
Nick Wormald. Personal communication.

\end{thebibliography}
\end{document}